\newtheorem{proposition}{Proposition}[section]
\newtheorem{theorem}[proposition]{Theorem}
\newtheorem{lemma}[proposition]{Lemma}
\theoremstyle{definition}
\newtheorem{remark}[proposition]{Remark}
\newtheorem{definition}[proposition]{Definition}
\newtheorem{example}[proposition]{Example}
\newtheorem{question}[proposition]{Question}
\title{Boundedness of log Fano pairs with certain K-stability}
\author{Konstantin Loginov and Chuyu Zhou}
\address{Department of Mathematics, Yonsei University, 50 Yonsei-ro, Seodaemun-gu, Seoul 03722, Republic of Korea}
\email{chuyuzhou1@gmail.com}
\address{Steklov Mathematical Institute of Russian Academy of Sciences, Moscow, Russia}
\email{loginov@mi-ras.ru}
\date{} 
\thanks{2010 
	    \emph{Mathematics Subject Classification}: 14J45.
	    \newline
	    \indent 
		\emph{Keywords}: Log Fano pair, Maeda type, boundedness, K-stability, K-semistable domain.
        \newline
		\indent
		\emph{Competing interests}: This work started while the authors were enjoying the SinG school in Geometry in Trento (Italy) during January 23--27, 2023, where the hospitality is gratefully acknowledged.
		}
\newcommand{\ord}{{\rm {ord}}}
\newcommand{\vol}{{\rm {vol}}}
\newcommand{\red}{{\rm {red}}}
\newcommand{\dt}{{\rm {dt}}}
\newcommand{\bA}{\mathbb{A}}
\newcommand{\bC}{\mathbb{C}}
\newcommand{\bN}{\mathbb{N}}
\newcommand{\bP}{\mathbb{P}}
\newcommand{\bQ}{\mathbb{Q}}
\newcommand{\bR}{\mathbb{R}}
\newcommand{\mD}{\mathcal{D}}
\newcommand{\mE}{\mathcal{E}}
\newcommand{\mG}{\mathcal{G}}
\newcommand{\mO}{\mathcal{O}}
\newcommand{\mP}{\mathcal{P}}
\newcommand{\mX}{\mathcal{X}}
\newcommand{\mY}{\mathcal{Y}}
\begin{document}

\begin{abstract}
We prove several boundedness results for log Fano pairs with certain K-stability. In particular, we prove that K-semistable log Fano pairs of Maeda type in each dimension form a log bounded family. We also compute K-semistable domains for some examples.
\end{abstract}

\maketitle

\setcounter{tocdepth}{1}
\tableofcontents

\section{Introduction}

We work over the complex number field $\bC$ throughout the article.

In this paper, we investigate the relation between K-stability and boundedness of log Fano pairs. A log pair $(X, D)$ where $D=\sum_{i=1}^kD_i$ is called a \emph{log Fano manifold} if it is log smooth and $D_i$ are distinct prime divisors on $X$ such that $-K_X-D$ is ample. Log Fano manifolds were classified in dimension $\leq 3$ by H. Maeda, see \cite{Maeda86, Loginov21}. It is well known that log Fano manifolds are not bounded in every dimension starting from $2$. Indeed, considering log Fano manifolds $(\mathbb{F}_n, s)$ where $\mathbb{F}_n$ is the $n$-th Hirzebruch surface and $s$ is a $(-n)$-section, it is clear that they are not bounded. However, if the log Fano manifolds have certain K-stability conditions, they are bounded in dimension $2$. 

Indeed, in \cite{Fujita20}, the following definition was introduced. A \emph{log Fano pair of Maeda type} is a log pair $(X, \sum_{i=1}^kc_iD_i)$ where $(X, \sum_{i=1}^kD_i)$ is a log Fano manifold,  $c_i\in [0, 1)\cap \bQ$, and $-K_X-\sum_{i=1}^kc_iD_i$ is ample. Then, in dimension $2$ there is the following

\begin{theorem}[{\cite[Theorem 1]{Fujita20}}]
\label{thm-2-dim}
Let $(X, \sum_ic_iD_i)$ be a log Fano pair of Maeda type such that $\dim X=2$. Then $(X, \sum_ic_iD_i)$ is K-semistable if and only if it is isomorphic to 
\begin{itemize}
\item
 $(\mathbb{P}^2, aC)$ with $C$ a smooth conic and $0\leq a \leq 3/4$, or
\item
$(\mathbb{P}^1\times \mathbb{P}^1, aC)$ with $C$ the diagonal and $0\leq a \leq 1/2$.
\end{itemize}
\end{theorem}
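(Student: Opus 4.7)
The plan is to combine Maeda's finite classification of two-dimensional log Fano manifolds \cite{Maeda86, Loginov21} with Fujita's valuative criterion: K-semistability of $(X, \sum c_i D_i)$ is equivalent to
\[
\beta(E) := A_{(X, \sum c_i D_i)}(E) - S_{-K_X - \sum c_i D_i}(E) \ge 0
\]
for every prime divisor $E$ over $X$. For each Maeda type $(X, \sum D_i)$ the coefficient vector $(c_1, \dots, c_k)$ lies in the rational polytope cut out by the ampleness of $-K_X - \sum c_i D_i$, and my task is either to exhibit an $E$ with $\beta(E) < 0$ on all of this polytope (ruling the type out) or to match the type with one of the two configurations in the statement.

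For the sufficiency direction, both listed pairs are equivariantly K-semistable under a reductive action: the stabilizer $\PGL_2 \subset \PGL_3$ of a smooth conic in $\bP^2$, and the diagonal $\PGL_2$ on $\bP^1 \times \bP^1$. By Zhuang's equivariant criterion the delta invariant is computed on invariant prime divisors, and the only such divisors to test are $C$ itself together with a line tangent to $C$ (respectively, a fiber of either projection). Checking $\beta \ge 0$ on these reduces to a short intersection computation that yields exactly $a \le 3/4$ in the first case and $a \le 1/2$ in the second.

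For the necessity direction, I would walk through the remaining items of Maeda's list and produce a destabilizing divisor in each case. The principal subcases are: $(\text{i})$ boundaries on $\bP^2$ consisting of lines, where a line itself destabilizes for every $c \in [0,1)$; $(\text{ii})$ boundaries on $\bP^1 \times \bP^1$ involving fibers, where a fiber of the opposite ruling destabilizes; $(\text{iii})$ underlying surface a Hirzebruch surface $\mathbb{F}_n$ with $n \ge 1$, where the negative section $\sigma_-$ has $\beta(\sigma_-) < 0$ throughout the admissible coefficient polytope because $S(\sigma_-)$ dominates the log discrepancy of $\sigma_-$; and $(\text{iv})$ supports on non-minimal surfaces (blow-ups of $\bP^2$ or $\mathbb{F}_n$), where an exceptional $(-1)$-curve furnishes the destabilizing test. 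Because $\rho(X) \le 3$ on every Maeda surface, each $\beta$-computation reduces to short Zariski-decomposition arithmetic on $N^1(X)_\bR$.

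The main obstacle I anticipate is the uniform bookkeeping across the full list, particularly for types with $\rho(X) \ge 2$ and several boundary components with independent coefficients: one must verify that the chosen destabilizing divisor has $\beta < 0$ on the \emph{entire} admissible coefficient polytope, not just at a single point. The cleanest way to handle this is to write $\beta(E)$ as a piecewise-polynomial function of the $c_i$ via the Zariski decomposition of $-K_X - \sum c_i D_i - tE$, and then check the sign only on the finitely many vertices of the polytope.
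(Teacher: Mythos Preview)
The paper does not supply a proof of this theorem: it is quoted verbatim from \cite{Fujita20} and used only as motivation for the boundedness results that follow. So there is no ``paper's own proof'' to compare against. Your outline is, in fact, a faithful sketch of Fujita's original argument: run through Maeda's finite list of two-dimensional log Fano manifolds and apply the $\beta$-criterion case by case. In that sense the proposal is correct in spirit and matches the literature rather than the present paper.

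Two small points on the sketch itself. First, in the sufficiency direction you say the $\PGL_2$-invariant test divisors on $(\bP^2,aC)$ are ``$C$ itself together with a line tangent to $C$''; a single tangent line is not invariant under the full $\PGL_2$ stabilizing the conic, so Zhuang's criterion does not literally hand you that divisor. What actually happens is that $\beta(C)=\tfrac{3-4a}{6}$ pins down the threshold $a=3/4$, and the K-semistability for $a\le 3/4$ then follows by interpolation between $\bP^2$ and the log Calabi--Yau wall, or by a direct Abban--Zhuang estimate---not by testing a tangent line. Second, your case (iv) (``non-minimal surfaces, blow-ups of $\bP^2$ or $\bF_n$'') does not occur: Maeda's two-dimensional list with nonempty reduced boundary only produces $\bP^2$, $\bP^1\times\bP^1$, and $\bF_n$, so the case split is (i)--(iii). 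Neither issue is fatal, but both would need to be tightened in a written-out proof.
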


The same phenomenon appears in dimension three. More precisely, in \cite[Theorem 1]{Loginov21} it is proven that three-dimensional K-semistable log Fano pairs of Maeda type with at least two components in the boundary are log bounded. Then it is natural to ask whether K-semistable log Fano pairs of Meada type in every dimension are bounded. One of the main results of this paper is to confirm this boundedness.

\begin{theorem}\label{thm: main1}{\rm{(= Theorem \ref{thm: bdd maeda} )}}
K-semistable log Fano pairs of Maeda type in dimension~$d$ are log bounded.
\end{theorem}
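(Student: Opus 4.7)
The plan is to combine a volume bound on the ample class defining the log Fano manifold, a uniform dimension-only bound on the boundary coefficients, and Birkar's BAB theorem. Set $L := -K_X - \sum_i c_i D_i$ (ample by the Maeda assumption on the pair) and $H := -K_X - \sum_i D_i$ (ample since $(X, \sum_i D_i)$ is a log Fano manifold). By the Fujita--Liu volume bound for K-semistable log Fano pairs of dimension $d$, one has $L^d \leq (d+1)^d$, and the telescoping identity
\[
L^d - H^d \;=\; \sum_i (1-c_i)\, D_i \cdot \sum_{j=0}^{d-1} L^{d-1-j} H^j
\]
writes the difference as a sum of non-negative intersection numbers ($D_i$ effective, $L$ and $H$ ample, hence all products $L^{d-1-j} H^j$ nef). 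Thus $H^d \leq L^d \leq (d+1)^d$, and since $H$ is integral Cartier on the smooth $X$, the positive integer $H^d$ is bounded purely in terms of $d$.

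The second step is to use K-semistability to control the coefficients $c_i$. Applying the stability-threshold inequality $\delta(X, \sum_j c_j D_j) \geq 1$ to the prime divisor $E = D_i$ gives $1 - c_i \geq S_L(D_i)$. Combining with Fujita's lower bound $S_L(D_i) \geq T_L(D_i)/(d+1)$ and an enlarged lower bound on the pseudo-effective threshold $T_L(D_i)$ obtained by absorbing a small multiple of $D_i$ into the ample class $H$ and the effective class $\sum_{j \neq i}(1-c_j)\, D_j$, one extracts a uniform bound $c_i \leq c_0(d) < 1$. A parallel intersection inequality $\sum_i (1-c_i)\, D_i \cdot L^{d-1} < L^d$, together with this coefficient bound, caps the number of components $k$ by a function of $d$.

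With $c_i$ uniformly bounded away from $1$, $(X, \sum_i c_i D_i)$ becomes an $\varepsilon(d)$-lc K-semistable log Fano pair, and Birkar's BAB theorem places $X$ in a bounded family. Combined with the bounds on $H^d$ and on $k$, a standard Hilbert-scheme argument parametrizing the components of $\sum_i D_i$ as divisors of controlled degree on $X$ gives log boundedness of $(X, \sum_i D_i)$, which contains the support of any Maeda-type boundary, and this is the conclusion sought.

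The main obstacle is the second step: the direct estimate $c_i \leq 1 - S_L(D_i)$ coming from $\delta \geq 1$ is hollow unless $S_L(D_i)$ admits a lower bound independent of how close $c_i$ is to $1$. Indeed, the ``trivial'' pseudo-effective bound $T_L(D_i) \geq 1 - c_i$, furnished by the ampleness of $H$, only yields $c_i \leq 1$. A non-trivial dimension-only improvement requires exploiting the Maeda hypothesis (the ampleness of $-K_X - \sum_i D_i$, not just of $L$) to enlarge $T_L(D_i)$ strictly beyond $1 - c_i$ by a definite amount depending only on $d$, and this is where the interaction between K-stability and the log Fano manifold structure is most delicate.
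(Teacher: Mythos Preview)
Your architecture matches the paper's: bound the coefficients $c_i$ away from $1$ by a function of $d$, deduce $\epsilon(d)$-lc, apply BAB, then bound the degree of $\sum_i D_i$. The paper uses the $\alpha$-invariant where you use the $\delta$-invariant (i.e.\ $A_{(X,\sum c_jD_j)}(D_i)\ge S_L(D_i)$), but these are interchangeable here. Your extra Fujita--Liu volume bound and explicit bound on $k$ are correct but not needed once BAB is in place.

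The genuine gap is exactly the one you flag in your last paragraph and do not close. Writing $L=H+\sum_j(1-c_j)D_j$ gives $T_L(D_i)\ge (1-c_i)+\tau(D_i;H)$, so your chain
\[
1-c_i \;\ge\; S_L(D_i)\;\ge\;\tfrac{1}{d+1}T_L(D_i)\;\ge\;\tfrac{1}{d+1}\bigl((1-c_i)+\tau(D_i;H)\bigr)
\]
yields $1-c_i\ge \tau(D_i;H)/d$. This is useful only if $\tau(D_i;H)$ admits a dimension-only lower bound; without it the inequality collapses to $1-c_i\ge (1-c_i)/(d+1)$, which is vacuous. Your phrase ``absorbing a small multiple of $D_i$ into the ample class $H$'' is not an argument: openness of the ample cone gives no uniform $\epsilon$ independent of $(X,H,D_i)$.

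The paper supplies precisely this missing input as a separate lemma: for any log smooth $(X,\sum_i D_i)$ with $H=-K_X-\sum_iD_i$ big and nef, one has $\tau(\sum_iD_i;H)\ge \tfrac{1}{d+1}$. The proof is short and does not use K-stability at all: if $\tau<\tfrac{1}{d+1}$ then $K_X+lH$ is not pseudo-effective for $l=1,\dots,d+1$, so $H^0(X,K_X+lH)=0$; Kawamata--Viehweg vanishing kills the higher cohomology, forcing $\chi(X,K_X+lH)=0$ for $d+1$ distinct values of $l$, contradicting Riemann--Roch since $\chi$ is a degree-$d$ polynomial in $l$ with nonzero leading term $H^d/d!$. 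Once you plug this lemma in, your $\delta$-based route and the paper's $\alpha$-based route give essentially the same bound $1-c_i\ge \tfrac{1}{d(d+1)}$ (the paper gets $\tfrac{1}{2(d+1)^2}$), and the rest of your plan goes through.
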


We also consider another set of log pairs with certain K-stability.
Fix two positive integers $d$ and $k$, a positive number $v$,  and a positive integer $I$. We consider the set $\mE:=\mE(d,k,v, I)$ of log pairs $(X, \sum_{i=1}^k D_i)$ satisfying the following conditions:
\begin{enumerate}
\item $X$ is a Fano variety of dimension $d$ and $(-K_X)^d=v$;
\item $D_i$ is an effective $\bQ$-divisor satisfying $D_i\sim_\bQ -K_X$ for every $1\leq i\leq k$;
\item $I(K_X+D_i)\sim 0$ for every $1\leq i\leq k$;
\item there exists $(c_1,...,c_k)\in \Delta^k$ such that $(X, \sum_i c_iD_i)$ is K-semistable, where $\Delta^k:=\{(c_1,...,c_k)\ |\ \textit{$c_i\in [0,1)\cap \bQ$ and $0\leq \sum_i c_i<1$}\}$.
\end{enumerate}

We confirm the boundedness of $\mE$.

\begin{theorem}\label{thm: main2}{\rm{(= Theorem \ref{thm: bdd II} )}}
The set $\mE$ is log bounded.
\end{theorem}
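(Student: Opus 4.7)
The plan is to reduce log boundedness of $(X, \sum D_i) \in \mE$ to boundedness of the underlying $\bQ$-Fano variety $X$, and then to bound $X$ by combining the given K-semistability hypothesis with the fixed volume $(-K_X)^d = v$.

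First, for each $(X, \sum D_i) \in \mE$, pick $c = (c_1, \ldots, c_k) \in \Delta^k$ with $(X, \Delta_c)$ K-semistable, where $\Delta_c := \sum_i c_i D_i$, and set $\lambda := \sum_i c_i \in [0, 1)$. Then $\Delta_c \sim_{\bQ} \lambda(-K_X)$, so $\vol(-K_X - \Delta_c) = (1-\lambda)^d v$. The observation is that once $X$ itself lies in a log bounded family (in particular, with bounded Cartier index of $-K_X$), each $D_i$---an effective $\bQ$-divisor in the class of $-K_X$ with coefficients in the fixed finite set $I$---is parameterized by a bounded component of the relative Hilbert scheme. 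Thus log boundedness of $\mE$ reduces to boundedness of the underlying Fano variety $X$.

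Second, to bound $X$, I would invoke boundedness of K-semistable log Fano pairs. The pair $(X, \Delta_c)$ is K-semistable log Fano of dimension $d$ with underlying variety of fixed volume $v$. The target statement is that this suffices to place $X$ in a bounded family, and the approach would go via BAB (Birkar): I aim to show that $X$ is $\epsilon$-log canonical for some $\epsilon = \epsilon(d, v, k, I) > 0$. The key K-semistability inequality
\[
A_X(E) \geq \ord_E(\Delta_c) + (1-\lambda)\, S_{-K_X}(E)
\]
for every divisorial valuation $E$ over $X$, combined with Fujita-type lower bounds on $S_{-K_X}(E)$ involving $v$, should yield a uniform positive lower bound on $A_X(E)$.

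The hard part will be the regime $\lambda \to 1^{-}$: here $(1-\lambda)^d v$ shrinks to $0$, the K-semistability inequality above degenerates, and the usual boundedness theorems for K-semistable log Fano pairs cease to apply directly. One plan to handle this is to use the convexity and rational polytope structure of the K-semistable domain of $(X, \sum D_i)$ to select an alternative K-semistable parameter $c' \in \Delta^k$ with $\sum_i c_i' \leq 1 - \eta$ for some uniform $\eta = \eta(d, v, k, I) > 0$, and then apply the existing boundedness of K-semistable log Fano pairs to $(X, \Delta_{c'})$ whose pair volume is now $\geq \eta^d v > 0$. An alternative plan is to rescale $c$ up to the limiting ratios $\tilde{c}_i$ with $\sum \tilde{c}_i = 1$, obtaining a (klt) log Calabi--Yau pair $(X, \sum \tilde{c}_i D_i)$, and then invoke boundedness for klt log Calabi--Yau pairs with fixed volume $v$ and coefficients in a finite set. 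Either route, once carried through, bounds $X$ and, via the reduction in the first paragraph, log bounds $(X, \sum D_i)$.
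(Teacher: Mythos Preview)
Your reduction in the first paragraph is correct and matches the paper's Step~1. You also correctly identify the crux: when $\lambda=\sum_i c_i$ is close to $1$, the K-semistability of $(X,\Delta_c)$ gives essentially no information about $X$, so one needs a \emph{uniform} gap $1-\lambda\ge\epsilon_0(d,k,v,I)>0$. However, neither of your two proposed routes to this gap works as stated, and the actual mechanism the paper uses is missing from your proposal.

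For Plan~1, invoking a ``rational polytope structure of the K-semistable domain'' is circular: such a structure is not established in general (the paper explicitly leaves it as a future direction), and even if each individual domain were a polytope, you would still need to show that the minimum value of $\sum_i c_i$ over that polytope is bounded away from $1$ \emph{uniformly} over all $(X,\sum D_i)\in\mE$. That uniform statement is exactly the hard part, not a consequence of convexity. For Plan~2, the rescaled limit $(X,\sum\tilde c_iD_i)$ need only be lc, not klt, and the coefficients $\tilde c_i$ of the $D_i$ are arbitrary rationals summing to $1$ (not in a fixed finite set), so the log Calabi--Yau boundedness you invoke does not apply.

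What the paper actually does to obtain the gap is the following, and both ingredients are absent from your sketch. First (Step~2), each $D_i$ is replaced by a \emph{general} $m(d)$-complement $D_i'$ so that $(X,D_i')$ is already log canonical; this is legitimate because K-semistability is open in families (\cite{Xu20, BLX22}), so perturbing one boundary component at a time within $\tfrac{1}{m(d)}|-m(d)K_X|$ preserves the existence of a K-semistable coefficient vector. Second (Step~3), assuming by contradiction a sequence with $\mu_j\to 1$, one takes the $\delta$-minimizing divisor $E_j$ for $(X_j,\sum\tfrac{a_j}{k}D_{ji})$ (via \cite{BLZ22,LXZ22}); since $a_j\to 1$, the induced special degeneration of the lc Calabi--Yau pair $(X_j,\sum\tfrac{1}{k}D_{ji})$ has lc central fiber, forcing $E_j$ to be an \emph{lc place of every} $(X_j,D_{ji})$. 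This is the key point: it makes $A_{(X_j,\sum c_iD_{ji})}(E_j)=(1-\sum_i c_i)A_{X_j}(E_j)$ for \emph{any} coefficients $c_i$, so $\beta(E_j)=(1-\sum_i c_i)\bigl(A_{X_j}(E_j)-S_{X_j}(E_j)\bigr)$ has the same sign at the K-unstable vector $(\tfrac{a_j}{k},\ldots,\tfrac{a_j}{k})$ and at the assumed K-semistable vector, a contradiction. The resulting gap then gives $\delta(X)\ge\epsilon_0$, and boundedness of $X$ follows from \cite{Jiang20} (not BAB via $\epsilon$-lc, though the two are closely related).
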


When there is only one component in the boundary, i.e. $k=1$, the log boundedness of $\mE$ is confirmed in \cite[Section 5]{Zhou23}. Also we note that Theorem \ref{thm: main2} seems related to \cite[Theorem 1.4]{Chen20}. However, the coefficients are fixed in \cite[Theorem 1.4]{Chen20}, while in our setting, the coefficients of $(X, \sum_{i}^k c_iD_i)$ are randomly changed in $[0,1]$.

In the last section, we also compute K-semistable domains (see Definition \ref{def: kss domain}) for various examples. In particular, we describe the domains for two special classes of pairs as follows.

\begin{theorem}{\rm{(Theorem \ref{thm: polytope example}, Example \ref{exa: polytope example})}}
For $n\geq 2$, the K-semistable domain for $(\bP^n, Q+L)$ is a polytope generated by the following three K-semistable log pairs
$$\bP^n, \ \ \  (\bP^n, \frac{n+1}{2n}Q), \ \ \ (\bP^n, \frac{n}{2(n-1)}Q+\frac{1}{n-1}L).$$
Here $Q$ is a smooth quadric hypersurface and $L$ is a hyperplane such that $(\bP^n, Q+L)$ is log smooth. 
\end{theorem}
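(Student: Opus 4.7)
The plan is to exploit that the K-semistable domain is a closed, convex, rational polytope, so it suffices to determine its vertices and check K-semistability at each one.

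For the necessary direction, assume $(\bP^n, aQ + bL)$ is K-semistable. Writing $-K_{\bP^n} - aQ - bL = (n+1-2a-b)H$ with $Q \sim 2H$ and $L \sim H$, a direct integration gives
\[ S(Q) = \frac{n+1-2a-b}{2(n+1)}, \qquad S(L) = \frac{n+1-2a-b}{n+1}, \]
and, using $A(Q) = 1-a$ and $A(L) = 1-b$,
\[ \beta(Q) = \frac{(n+1)-2an+b}{2(n+1)}, \qquad \beta(L) = \frac{2a-nb}{n+1}. \]
The valuative criterion forces $\beta(Q) \geq 0$ and $\beta(L) \geq 0$, which together with $a, b \geq 0$ cut out exactly the closed triangle with vertices $(0, 0)$, $(\tfrac{n+1}{2n}, 0)$, $(\tfrac{n}{2(n-1)}, \tfrac{1}{n-1})$ in the $(a, b)$-plane; both $\beta(Q)$ and $\beta(L)$ vanish at the third vertex.

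For the sufficient direction, I would verify K-semistability at each vertex and then invoke convexity of the K-semistable domain. The vertex $(0, 0)$ is $\bP^n$, which is K-polystable. The vertex $(\tfrac{n+1}{2n}, 0)$ corresponds to $(\bP^n, \tfrac{n+1}{2n}Q)$, a standard higher-dimensional analogue of the conic case in Theorem~\ref{thm-2-dim} that is K-semistable by $\mathrm{PO}(n+1)$-equivariant arguments. The crux is the third vertex $(\bP^n, \tfrac{n}{2(n-1)}Q + \tfrac{1}{n-1}L)$, for which I would exploit the $\mathrm{SO}(n)$-symmetry that simultaneously fixes $Q$ and $L$ (the stabilizer of $L$ inside the isometry group of $Q$). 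By equivariant K-semistability, the $\beta \geq 0$ check reduces to $\mathrm{SO}(n)$-invariant divisorial valuations over $\bP^n$, a short list built from $Q$, $L$, and natural invariant blow-ups of centers such as the isolated fixed point $[1:0:\cdots:0]$ and the smooth subquadric $Q \cap L \subset L$; each can be bounded below via an Abban--Zhuang flag computation, while $\beta(Q) = \beta(L) = 0$ appear only as boundary equalities. Once the three vertices are shown to be K-semistable, convexity of the K-semistable domain yields the entire closed triangle, completing the proof.

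The main obstacle is this third-vertex verification: the $\mathrm{SO}(n)$-equivariant reduction requires a uniform (in $n$) enumeration of invariant valuations and the corresponding $\beta$-estimates, and alternatives such as an explicit K-polystable degeneration under a suitable $\bG_m$-action would have to be identified carefully. The two necessary $\beta$-computations and the convexity package are routine by comparison.
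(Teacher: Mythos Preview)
Your overall architecture matches the paper exactly: compute $\beta(Q)$ and $\beta(L)$ to pin down the triangle, verify K-semistability at the three vertices, and invoke the interpolation property for K-stability to fill in the interior. The $\beta$-computations you wrote are correct and agree with the paper's. The first two vertices are handled the same way.

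The genuine gap is the third vertex, as you already flag. Your proposed $\mathrm{SO}(n)$-equivariant enumeration is plausible in principle but is not carried out, and making it uniform in $n$ via Abban--Zhuang would be nontrivial: the invariant centers include not just $Q$, $L$, the pole, and $Q\cap L$, but also the tangent hyperplane to $Q$ at the pole and various weighted blow-ups there, and controlling all of these simultaneously for every $n$ is exactly the kind of case analysis one hopes to avoid.

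The paper bypasses this entirely via a cone trick. One first degenerates $(\bP^n,\tfrac{n+1}{2n}Q)$ to its K-polystable limit $(X,\tfrac{n+1}{2n}D)$, where $X=\{x_0^2+\cdots+x_n^2=0\}\subset\bP(1^{n+1},2)$ and $D=X\cap\{z=0\}$, carrying $L$ along to some $D'$. The point is that $X$ is the projective cone over the smooth quadric $Q_{n-1}$ with polarization $\mO_{Q_{n-1}}(2)$, and $(X,\tfrac{n}{2(n-1)}D+\tfrac{1}{n-1}D')$ is precisely the cone pair over $(Q_{n-1},\tfrac{1}{n-1}Q_{n-2})$ with the correct infinity coefficient. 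The cone K-stability lemma (Lemma~\ref{lem:cone stability}) then reduces K-semistability of the third vertex to K-semistability of $(Q_{n-1},\tfrac{1}{n-1}Q_{n-2})$, which follows from the known $\mathrm{Kss}(Q_l,Q_{l-1})=[0,\tfrac{1}{l}]$ (Lemma~\ref{lem: zz}). Since K-semistability is closed under specialization in families of log Fano pairs, the original pair $(\bP^n,\tfrac{n}{2(n-1)}Q+\tfrac{1}{n-1}L)$ is K-semistable. This is both shorter and dimension-uniform; your hinted ``explicit K-polystable degeneration'' is exactly the missing ingredient, and the cone interpretation is what makes it computable.
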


\begin{theorem}{\rm{(Theorem \ref{thm: polytope example II}, Example \ref{exa: polytope example II})}}
For $n\geq 3$, the K-semistable domain for $(\bP^n, Q+Q')$ is a polytope generated by the following four K-semistable log pairs
$$\bP^n,\ \ \ (\bP^n, \frac{n+1}{2n}Q),\ \ \ (\bP^n, \frac{n+1}{2n}Q'),\ \ \ (\bP^n, \frac{n+1}{2(n-1)}Q+\frac{n+1}{2(n-1)}Q').$$
Here $Q, Q'$ are two distinct smooth quadric hypersurfaces such that $(\bP^n, Q+Q')$ is log smooth. 
\end{theorem}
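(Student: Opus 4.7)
The plan is to combine convexity of the K-semistable domain (established in \cite{Zhou23} and used throughout earlier sections of the paper) with two explicit beta-invariant computations to pin down the polytope, and then verify K-semistability at the four vertices so that convexity propagates it to the whole polytope.

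The key simplification is that every effective divisor on $\bP^n$ is proportional to the hyperplane class $H$. Thus for any divisorial valuation $E$ over $\bP^n$, the $S$-invariant splits as an affine function of the coefficients:
\[
S(E;\bP^n,c_1Q+c_2Q')=\bigl((n+1)-2c_1-2c_2\bigr)\cdot S_0(E),\qquad S_0(E):=\int_0^\infty\vol(H-tE)\,dt.
\]
Applied to $E=\ord_Q$, with $A_{\bP^n}(\ord_Q)=1$, $\ord_Q(Q)=1$, $\ord_Q(Q')=0$, and $S_0(\ord_Q)=\tfrac{1}{2(n+1)}$, a direct calculation gives
\[
\beta_{\ord_Q}(c_1,c_2)=\tfrac{1}{2}-\tfrac{nc_1-c_2}{n+1},
\]
so K-semistability forces $nc_1-c_2\le\tfrac{n+1}{2}$; symmetrically $\beta_{\ord_{Q'}}\ge 0$ forces $nc_2-c_1\le\tfrac{n+1}{2}$. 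Together with $c_1,c_2\ge 0$, these four inequalities cut out precisely the polytope with the claimed vertices, establishing the ``$\subseteq$'' inclusion of the K-semistable domain in the polytope.

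For the reverse inclusion I verify K-semistability at each of the four vertices, after which convexity propagates the property to the whole polytope. The origin $\bP^n$ is classical; the vertices $(\bP^n,\tfrac{n+1}{2n}Q)$ and $(\bP^n,\tfrac{n+1}{2n}Q')$ are the standard K-semistable quadric thresholds, which can be checked using the $\mathrm{SO}(n+1)$-action on a single smooth quadric to reduce to spherical equivariant valuations (this is an $n$-dimensional analogue of Theorem~\ref{thm-2-dim}). The fourth vertex $(\bP^n,\tfrac{n+1}{2(n-1)}(Q+Q'))$ is the new case: after simultaneously diagonalizing $Q$ and $Q'$, one exploits the induced finite symmetry group---the involution $Q\leftrightarrow Q'$ together with the $(\bZ/2)^n$-action of coordinate sign changes, which preserves both quadrics---to reduce the nonnegativity of $\beta_E$ to a manageable class of equivariant divisorial valuations, which one then verifies by direct computation modeled on Step~2.

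The main obstacle is this verification at the fourth vertex. For $n\ge 4$ the pair is a genuine log Fano of Maeda type and one expects $\delta=1$ realized jointly by $\ord_Q$ and $\ord_{Q'}$; the central difficulty is ruling out destabilization by every other divisorial valuation. For $n=3$ the fourth vertex sits exactly on the log Calabi--Yau boundary $2(c_1+c_2)=n+1$, so one must either work within the log Calabi--Yau K-stability formalism, or approach the vertex as a limit along the interior of the polytope and appeal to the closedness properties of the K-semistable domain developed in earlier sections of the paper.
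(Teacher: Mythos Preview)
Your containment ``$\subseteq$'' via $\beta_{\ord_Q}$ and $\beta_{\ord_{Q'}}$, and the reduction of ``$\supseteq$'' to the four vertices by interpolation, coincide with the paper's argument in Example~\ref{exa: polytope example II}; the first three vertices are likewise handled by the same standard facts. The gap is the fourth vertex $(\bP^n,\tfrac{n+1}{2(n-1)}Q+\tfrac{n+1}{2(n-1)}Q')$ for $n\ge 4$, which you yourself flag as ``the main obstacle'' and for which you only give a sketch. That sketch does not work as stated: an automorphism of $\bP^n$ swapping $Q$ and $Q'$ generically does not exist (after simultaneous diagonalization with $Q=\sum x_i^2$ and $Q'=\sum\lambda_i x_i^2$, such an involution forces all $\lambda_i^2$ to coincide), and the residual $(\bZ/2)^{n+1}$ of sign changes is a finite group, so passing to equivariant valuations gives essentially no reduction---one is still left with an infinite-dimensional family of divisorial valuations to test, not a ``manageable class''. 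Without a genuine mechanism here there is no proof.

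The paper's treatment of this vertex (Theorem~\ref{thm: polytope example II}) avoids any direct $\delta$-computation on $\bP^n$. One degenerates $(\bP^n,\tfrac{n+1}{2n}Q)$ along its special test configuration to the K-polystable limit $(X,\tfrac{n+1}{2n}D)$, where $X=\{x_0^2+\cdots+x_n^2=0\}\subset\bP(1^{n+1},2)$ and $D=X\cap\{z=0\}$; let $D'$ be the limit of $Q'$. Since K-semistability of the central fibre implies that of the general fibre, it suffices to show $(X,\tfrac{n+1}{2(n-1)}D+\tfrac{n+1}{2(n-1)}D')$ is K-semistable. But $X$ is the projective cone over $Q$ polarized by $\mO_Q(2)$, and this pair is precisely the cone over $(Q,\tfrac{n+1}{2(n-1)}\,Q'|_Q)$ with the infinity coefficient dictated by Lemma~\ref{lem:cone stability}; hence one is reduced to K-semistability of $(Q,\tfrac{n+1}{2(n-1)}\,Q'|_Q)$ on the $(n-1)$-dimensional quadric, which follows from Lemma~\ref{lem: zz} (using that the smooth complete intersection $Q\cap Q'$ is K-semistable and that $\tfrac{n+1}{2(n-1)}$ lies below the relevant threshold). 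For $n=3$ the vertex is the log Calabi--Yau pair $(\bP^3,Q+Q')$, which is log smooth, hence lc, hence K-semistable---as you already observed.
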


\noindent
\subsection*{Acknowledgement}
We are grateful to Julia Schneider for her help on Latex techniques.
 C. Zhou is supported by the grant of European Research Council (ERC-804334).
K. Loginov is supported by Russian Science Foundation under grant 21-71-00112.

\section{Preliminaries}

 For the standard definitions of birational geometry, including the concepts of klt, lc, dlt and $\epsilon$-lc singularities, we refer to \cite{KM98,Kollar13, Birkar19}.
 
 We say that $(X,\Delta)$ is a \emph{couple} if $X$ is a normal projective variety and $\Delta$ is an effective $\bQ$-divisor on $X$. We say a couple $(X, \Delta)$ is a \textit{log pair} if $K_X+\Delta$ is $\bQ$-Cartier.  The log pair $(X,\Delta)$ is called \emph{log Fano} if it admits lc singularities and $-(K_X+\Delta)$ is ample; if $\Delta=0$, we just say $X$ is a \emph{Fano variety}. 
 The log pair $(X,\Delta)$ is called a \emph{log Calabi-Yau pair} if $K_X+\Delta\sim_\bQ 0$.

\subsection{K-stability}

Let $(X,\Delta)$ be a log pair. Suppose $f\colon Y\to X$ is a proper birational morphism between normal varieties and $E$ is a prime divisor on $Y$, we say that $E$ is a prime divisor over $X$ and we define the following invariant
$$A_{(X,\Delta)}(E):=1+\ord_E(K_Y-f^*(K_X+\Delta)), $$
which is called the \emph{log discrepancy} of $E$ associated to the log pair $(X,\Delta)$.
If $(X,\Delta)$ is a log Fano pair, we define the following invariant
$$S_{(X,\Delta)}(E):=\frac{1}{\vol(-K_X-\Delta)}\int_0^\infty \vol(-f^*(K_X+\Delta)-xE){\rm{d}}x .$$
Denote $\beta_{(X,\Delta)}(E):=A_{(X,\Delta)}(E)-S_{(X,\Delta)}(E)$ and $\delta(X, \Delta):=\inf_E \frac{A_{(X, \Delta)}(E)}{S_{(X, \Delta)}(E)}$, where $E$ runs over all prime divisors over $X$ for the latter definition. By the works \cite{Fuj19, Li17}, one can define the K-semistability of a log Fano pair as follows.
\begin{definition}
Let $(X,\Delta)$ be a klt log Fano pair. 
We say that $(X,\Delta)$ is \emph{K-semistable} if $\beta_{X,\Delta}(E)\geq 0$ for any prime divisor $E$ over $X$ (or equivalently, $\delta(X, \Delta)\geq 1$).
\end{definition}

\begin{definition}
Let $(X, \Delta)$ be a log Calabi-Yau pair, i.e. $K_X+\Delta\sim_\bQ 0$. We say  $(X,\Delta)$ is \emph{K-semistable} if 
$(X, \Delta)$ is log canonical (this is equivalent to saying that $\beta_{(X, \Delta)}(E)\geq 0$ for any prime divisor $E$ over $X$ since $S_{(X, \Delta)}(E)$=0 in this case, see \cite[Corollary 9.4]{BHJ17}, \cite{Oda13}).
\end{definition}

\begin{remark}\label{rmk: remarks}
Let $(X, \Delta)$ be a klt log Fano pair and $0\leq D\sim_\bQ -K_X-\Delta$. We will use the following results.
\begin{enumerate}
\item For a rational number $0\leq c<1$, we have the following formula by a simple integral computation
$$S_{(X, \Delta+cD)}(E)=(1-c)S_{(X, \Delta)}(E), $$
where $E$ is any prime divisor over $X$.
\item Suppose $(X, \Delta)$ is of dimension $d$, we have the following relationship between alpha invariant and delta invariant by \cite[Theorem A]{BJ20}:
$$\frac{1}{d+1}\cdot\delta(X, \Delta)\leq \alpha(X, \Delta)\leq \delta(X, \Delta). $$
Here $\alpha(X, \Delta)$ is the alpha-invariant of the log Fano pair $(X, \Delta)$ (e.g. \cite{CS08}).
\end{enumerate}
\end{remark}

\subsection{Complements}

\begin{definition}
Let $(X, \Delta)$ be a log Fano pair. We say a $\bQ$-divisor $D\geq 0$ is a \emph{complement} of $(X, \Delta)$ if $(X, \Delta+D)$ is log canonical and $K_X+\Delta+D\sim_\bQ 0$; we say $D$ is an \emph{$N$-complement} for some positive integer $N$ if $D$ is a complement and $N(K_X+\Delta+D)\sim 0$.
\end{definition}

\begin{theorem}{\rm{(\cite{Birkar19})}}\label{thm: complement}
Let $(X, \Delta)$ be a klt log Fano pair, then there exists a positive number $N$ depending only on the dimension of $X$ and the coefficients of $\Delta$ such that $(X, \Delta)$ admits an $N$-complement.
\end{theorem}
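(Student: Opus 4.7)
The plan is to follow Birkar's induction on dimension, transferring an $N$-complement from a lower-dimensional pair obtained by divisorial adjunction on $(X,\Delta)$. As a first step I would use boundedness of $\epsilon$-lc Fano varieties of bounded volume, or equivalently Birkar's effective birationality theorem, to produce a uniform $N_0=N_0(d,I)$ and a divisor $M\in |{-}N_0(K_X+\Delta)|$ together with a rational $t$ bounded away from $0$ and $1$ by constants depending only on $(d,I)$, such that $(X,\Delta+tM)$ is strictly log canonical. After passing to a dlt modification $\pi\colon Y\to X$, I may assume the non-klt place is a prime divisor $S\subset Y$, and writing $K_Y+S+\Gamma=\pi^*(K_X+\Delta+tM)$ yields a plt pair $(Y,S+\Gamma)$ with $-(K_Y+S+\Gamma)$ nef and big over $X$.

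Next, divisorial adjunction produces a pair $(S,\Gamma_S)$ with $K_S+\Gamma_S=(K_Y+S+\Gamma)|_S$. By Shokurov's theory of the different, the coefficients of $\Gamma_S$ lie in a set depending only on $d$ and the original coefficient set $I$; crucially, the finiteness of $I$ (in particular the DCC property) propagates to the coefficients on $S$. The pair $(S,\Gamma_S)$ is a klt log Calabi-Yau pair of dimension $d-1$, so by the induction hypothesis there exist an integer $N$ depending only on $d-1$ and the adjusted coefficient set, together with a divisor $D_S\geq 0$, such that $N(K_S+\Gamma_S+D_S)\sim 0$ and $(S,\Gamma_S+D_S)$ is lc.

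The crucial and hardest step is to lift $D_S$ to an $N$-complement of $(X,\Delta)$. Here one applies Kawamata--Viehweg vanishing to the short exact sequence
\[
0\to \mO_Y(-NK_Y-\lceil N\Gamma\rceil-NS)\to \mO_Y(-NK_Y-\lceil N\Gamma\rceil)\to \mO_S\bigl(-N(K_S+\Gamma_S)\bigr)\to 0,
\]
whose kernel is $\pi^*$-nef-and-big by construction; this lifts the section cutting out $D_S$ to a section on $Y$ whose pushforward to $X$ furnishes the required $N$-complement. The two principal obstacles are: (i) producing the divisor $S$ with a uniformly controlled $t$, which is exactly the content of effective birationality and rests on Hacon--McKernan--Xu type boundedness for varieties of Fano type; and (ii) handling the case in which no non-klt center can be created directly on $(X,\Delta)$, where one must first run an MMP on a small perturbation of $-(K_X+\Delta)$ to reach a Mori fiber space, complement the base inductively, and pull back. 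A secondary technical difficulty is the bookkeeping needed to ensure that the divisibility of $N$ is compatible with $\lceil N\Gamma\rceil$ so that the vanishing step truly extends the complement with integral coefficients.
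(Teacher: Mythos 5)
The paper does not prove this statement: it is quoted verbatim from Birkar's work on anti-pluricanonical systems (the reference \cite{Birkar19}), so there is no in-paper argument to compare yours against. Judged as a summary of Birkar's actual proof, your outline captures the correct architecture --- induction on dimension, producing a non-klt place via effective birationality, passing to a dlt model and performing plt adjunction to a divisor $S$, complementing $(S,\Gamma_S)$ inductively, and lifting the section by a vanishing theorem, with a separate treatment via Mori fibre spaces when the boundary can be lowered. That is genuinely the shape of the argument, and you correctly identify effective birationality and the lifting step as the two pressure points.

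There are, however, three places where the sketch as written would not close. First, $(S,\Gamma_S)$ is not a log Calabi--Yau pair: $K_Y+S+\Gamma$ is the pullback of $K_X+\Delta+tM\sim_\bQ(1-tN_0)(K_X+\Delta)$, so $-(K_S+\Gamma_S)$ is nef and big rather than trivial, and the inductive hypothesis one needs is boundedness of complements for (relative, generalized) pairs of Fano type in dimension $d-1$, proved in tandem with effective birationality --- not an induction on log Calabi--Yau pairs. Second, after lifting a section of $-N(K_S+\Gamma_S+D_S)$ to $Y$ and pushing forward to a divisor $D$ on $X$ with $N(K_X+\Delta+D)\sim 0$, it is not automatic that $(X,\Delta+D)$ is log canonical; this requires an inversion-of-adjunction argument comparing singularities of $(Y,S+\Gamma+D_Y)$ near $S$ with those of $(S,\Gamma_S+D_S)$, and your sketch omits it. Third, you do not treat the \emph{exceptional} case, in which no strictly lc structure can be created by any choice of $M$ and $t$ and no MMP lowers the boundary; Birkar handles these pairs by showing they form a bounded family (a BAB-type statement), from which bounded complements follow by an entirely different, non-inductive mechanism. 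Finally, note that the uniformity of $N$ in the coefficients requires $\Delta$ to have coefficients in a fixed finite (or hyperstandard/DCC) set, and the adjunction step must round the coefficients of $\Gamma_S$ back into such a set before the induction applies; you gesture at this but it is a nontrivial bookkeeping layer, not a corollary of Shokurov's formula alone.
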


\subsection{Boundedness of Fano varieties}

We recall some results on boundedness of Fano varieties.

\begin{definition}
Let $\mP$ be a set of normal projective varieties of dimension $d$. We say $\mP$ is bounded if there exists a projective morphism $ \mY\to T$ between schemes of finite type such that for any $X\in \mP$, there exists a closed point $t\in T$ such that $X\cong \mY_t$. Let $\mP'$ be a set of couples of dimension $d$, we say 
$\mP'$ is log bounded if there exist a projective morphism $\mY\to T$ between schemes of finite type and a reduced Weil divisor $\mD$ on $\mY$ such that for any $(X, \Delta)\in \mP$, there exists a closed point $t\in T$ such that $(X, \red(\Delta))\cong (\mY_t, \mD_t)$. Here $\red(\Delta)$ means taking all the coefficients of components in $\Delta$ to be one. 
\end{definition}

\begin{theorem}{\rm{(\cite{Birkar19, Birkar21})}}\label{thm: BAB}
Fix a positive integer $d$ and a positive real number $\epsilon>0$. Then the following set lies in a bounded family:
$$\{X\ |\ \textit{$X$ is of dimension $d$ and $(X, \Delta)$ is $\epsilon$-lc log Fano for some $\bQ$-divisor $\Delta$ on $X$}\}. $$
\end{theorem}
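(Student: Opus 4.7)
The plan is to follow Birkar's strategy and, for every $X$ in the family, produce an embedding into a fixed projective space by a uniform multiple of $-K_X$ of uniformly bounded degree; a Hilbert scheme argument then concludes the boundedness. The two key reductions I would aim for are (a) a uniform upper bound on the anti-canonical volume $(-K_X)^d\leq V(d,\epsilon)$, and (b) a uniform integer $m=m(d,\epsilon)$ such that $|-mK_X|$ defines a birational map. Combined with the $\epsilon$-lc hypothesis, these inputs feed into a Matsusaka-type effective very ampleness statement and control the Hilbert function of the embedding.

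For the volume bound (a), I would proceed by induction on $d$, so assume the theorem for dimension $\leq d-1$. Fix $X$ of dimension $d$ with $(X,\Delta)$ being $\epsilon$-lc log Fano, and suppose for contradiction that $\vol(-K_X)$ is arbitrarily large. Using the boundedness of complements theorem stated above, extract an $N$-complement $D$ of $(X,\Delta)$ with $N=N(d,\epsilon)$, so that $(X,\Delta+D)$ is lc and $K_X+\Delta+D\sim_\bQ 0$. When the volume is large one can cut with a general member of a small rational multiple of $|-K_X|$ to create a non-klt center through a general point; by tie-breaking and passing to a minimal such center, Kawamata subadjunction yields a lower-dimensional lc pair of Fano type whose volume is controlled by the inductive statement applied to its canonical model. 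Tracking coefficients carefully under subadjunction then bounds the original volume, yielding (a).

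For effective birationality (b), the plan is to use the volume bound to produce, for two general points $x_1,x_2\in X$, a non-klt center through $x_1$ with log canonical threshold bounded in terms of $\vol(-K_X)^{1/d}$; then the $\epsilon$-lc assumption, together with the fixed $N$-complement, allows one to isolate this center after a small perturbation so that $x_1$ becomes an isolated non-klt place and the center misses $x_2$. Applying Nadel vanishing to an appropriate fractional multiple of $-K_X$ shows that sections of $|-mK_X|$ separate $x_1$ from $x_2$ for a uniform $m=m(d,\epsilon)$. A subtle point is that the production and isolation of non-klt centers must be simultaneously inductive on dimension: restricting the complement to the minimal non-klt center produces a lower-dimensional generalized Fano-type pair with bounded coefficient set, to which the inductive hypothesis applies.

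Once (a) and (b) are in hand, a Matsusaka-type argument exploiting the $\epsilon$-lc singularities shows that $-\ell m K_X$ is very ample for a uniform $\ell=\ell(d,\epsilon)$, and $(-\ell m K_X)^d$ is uniformly bounded above by (a). The image of the induced embedding lies in a fixed projective space with bounded degree and dimension, hence in a finite union of Hilbert schemes, and the varieties $X$ form a bounded family. The hardest step I anticipate is (b): controlling the creation and isolation of non-klt centers uniformly in the family is precisely where the interplay between $\epsilon$-lc singularities, $N$-complements, and subadjunction must be orchestrated, and the resulting simultaneous induction on dimension (carrying along both the volume bound and the effective birationality statement) constitutes the technical heart of the argument.
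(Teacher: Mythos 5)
This statement is not proved in the paper at all: it is the Borisov--Alexeev--Borisov (BAB) conjecture, quoted verbatim from Birkar's two papers \cite{Birkar19, Birkar21}, so there is no internal argument to compare yours against. Judged on its own terms, your proposal is a reasonable high-level roadmap of Birkar's actual strategy --- $N$-complements, an inductive volume bound, effective birationality via creation and isolation of non-klt centres, and a Hilbert-scheme conclusion --- but it is a table of contents rather than a proof, and it glosses over the two places where the real difficulty lives.

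First, the induction you describe (restrict to a minimal non-klt centre, apply subadjunction, invoke the lower-dimensional statement) does not close as stated: the divisor produced on the centre by adjunction is not an honest effective boundary but carries a moduli part that is only nef, so the inductive objects are \emph{generalized} polarized pairs, and the entire apparatus of complements and volume bounds has to be redeveloped in that category. This is not a technicality one can wave at; it is the reason \cite{Birkar19} is structured around generalized pairs from the outset. Second, and more seriously, your steps (a) and (b) only yield \emph{birational} boundedness --- a bounded family of birational models --- and the passage from there to boundedness of the varieties $X$ themselves is precisely the content of \cite{Birkar21}: one needs a uniform positive lower bound on lc thresholds of anti-pluricanonical systems (equivalently, control of the singularities of members of $|-mK_X|$) to show that $X$ is obtained from the bounded birational model by a bounded sequence of operations, and hence that a fixed multiple of $-K_X$ is very ample on $X$ itself. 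Your ``Matsusaka-type argument'' presupposes exactly this control and is where the $\epsilon$-lc hypothesis does its essential work; as written, that step is a gap rather than a proof.
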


\begin{theorem}{\rm{(\cite{Jiang20})}}\label{thm: jiang}
Fix a positive integer $d$ and two positive real numbers $v, \alpha_0>0$. Then the following set lies in a bounded family:
$$\{X\ |\ \textit{$X$ is a Fano variety of dimension $d$ with $(-K_X)^d\geq v$ and $\alpha(X)\geq \alpha_0$}\}. $$

\end{theorem}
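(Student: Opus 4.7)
The plan is, for each K-semistable Fano $X$ in the family, to construct an effective $\bQ$-divisor $\Delta$ on $X$ making $(X,\Delta)$ an $\epsilon$-lc log Fano pair of bounded anti-log-canonical volume for a uniform $\epsilon = \epsilon(d,v) > 0$, so that Birkar's BAB theorem (cited just before the statement) immediately yields boundedness of the underlying $X$. The three key ingredients are the $\alpha$-invariant bound from K-semistability, Birkar's boundedness of complements, and an effective minimal log discrepancy bound that uses the volume hypothesis in an essential way.

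First I would invoke Remark 2.3(2): K-semistability of $X$ forces $\alpha(X) \geq \tfrac{1}{d+1}$. Applying Birkar's theorem on boundedness of complements to the klt Fano $X$ produces an integer $N = N(d)$ and an effective $\bQ$-divisor $D \sim_\bQ -K_X$ with $(X,D)$ lc and $N(K_X+D) \sim 0$. Now choose $c = \tfrac{1}{2(d+1)}$ and set $\Delta := cD$. The alpha inequality $A_X(E) \geq \tfrac{1}{d+1}\,\ord_E(D)$, valid for every divisorial valuation $E$ over $X$, gives
\[
A_{(X,cD)}(E) \;=\; A_X(E) - c\,\ord_E(D) \;\geq\; \tfrac{1}{2}\, A_X(E).
\]
Moreover $-(K_X+cD) \sim_\bQ (1-c)(-K_X)$ is ample with volume $(1-c)^d v$, so it remains only to upgrade $(X,cD)$ from klt to $\epsilon$-lc.

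The crux, and the main obstacle, is to show that $X$ itself has minimal log discrepancy bounded below by a constant depending only on $d$ and $v$; by the displayed inequality, this is exactly what is needed for $(X,cD)$ to be $\epsilon$-lc. Here one uses K-semistability in its full strength together with the fixed volume. For any divisorial valuation $E$ over $X$, the beta criterion gives $A_X(E) \geq S_X(E)$, and the concavity of $\vol^{1/d}$ from the Fujita-Okounkov-Khovanskii inequality yields $S_X(E) \geq T_X(E)/(d+1)$, hence $A_X(E) \geq T_X(E)/(d+1)$; a uniform positive lower bound on $T_X(E)$ in terms of $d$ and $v$ would then close the argument. Equivalently, one can invoke Liu's normalized volume estimate for K-semistable Fanos, $\widehat{\vol}(x,X) \geq \bigl(\tfrac{d}{d+1}\bigr)^d v$ at every closed point $x \in X$, together with the standard comparison between the normalized volume and the minimal log discrepancy, to obtain $\mathrm{mld}(X) \geq 2\epsilon = 2\epsilon(d,v) > 0$.

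Finally, with $(X,cD)$ identified as an $\epsilon$-lc log Fano pair of volume $(1-c)^d v$, Birkar's BAB theorem recalled in the preliminaries furnishes a bounded family containing all such pairs, and in particular the underlying varieties $X$. The whole argument thus reduces to the third step above, which is where the volume hypothesis $(-K_X)^d = v$ is genuinely used; all the remaining steps are formal manipulations combining alpha invariants, complements, and BAB.
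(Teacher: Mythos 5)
First, note that the paper does not actually prove this statement: it is quoted from \cite{Jiang20}, so there is no internal proof to compare against, and your attempt must stand on its own. Your architecture --- produce a uniform $\epsilon=\epsilon(d,v)>0$ with $\mathrm{mld}(X)\geq\epsilon$ and then invoke the BAB statement recalled just before this theorem --- is a legitimate strategy (indeed, once you have $\mathrm{mld}(X)\geq 2\epsilon$ you may simply take $\Delta=0$ in BAB, so the detour through complements and the pair $(X,cD)$ is superfluous; those steps are correct but buy you nothing). The problem is that the step you yourself identify as the crux is exactly the step you do not prove, and neither of the two routes you sketch closes it.

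Route A asks for a uniform positive lower bound on $T_X(E)=\tau(E)$ over all prime divisors $E$ over $X$; no such bound exists. Already for $X=\bP^d$ and $E$ a hypersurface of degree $m$ one has $\tau(E)=(d+1)/m\to 0$, so the chain $A_X(E)\geq T_X(E)/(d+1)\geq c(d,v)$ cannot be completed, and more generally a lower bound on $A_X$ can never be extracted from a lower bound on $T_X$ alone. Route B is closer to a genuine proof, but the ``standard comparison between the normalized volume and the minimal log discrepancy'' is not standard: the implication ``$\widehat{\vol}(x,X)\geq c$ at every closed point $x$ implies $\mathrm{mld}(X)\geq\delta(d,c)$'' is a difficult theorem (Han--Liu--Qi) whose proof itself rests on boundedness technology at least as heavy as the statement you are trying to establish, and it postdates \cite{Jiang20}; moreover Liu's inequality only controls valuations centered at closed points, so one would still have to handle divisors whose center on $X$ has positive dimension. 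As written, the proposal reduces the theorem to an assertion that is unproved in one version and false in the other. For the record, \cite{Jiang20} proves the stronger statement that Fano varieties with $\alpha(X)\geq\alpha_0$ and $(-K_X)^d\geq v_0$ form a bounded family, using Birkar's theorems on complements and effective birationality rather than an mld bound; the K-semistable case then follows from $\alpha(X)\geq\frac{1}{d+1}$ together with the fixed volume.
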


\section{Boundedness I}

The goal of this section is to prove Theorem \ref{thm: main1}. We recall the following definition introduced in \cite{Fujita 20}:
\begin{definition}
A \emph{log Fano pair of Maeda type} is a pair $(X, \sum_{i=1}^kc_iD_i)$ where $(X, \sum_{i=1}^kD_i)$ is a log Fano manifold,  $c_i\in [0, 1)\cap \bQ$, and $-K_X-\sum_{i=1}^kc_iD_i$ is ample. 
 Here by a \emph{log Fano manifold} we mean a log pair $(X, \sum_{i=1}^kD_i)$ such that it is log smooth and $D_i$ are distinct prime divisors on $X$ with ample $-K_X-\sum_{i=1}^kD_i$. 
\end{definition}

To give an example, the log pair $(\mathbb{P}^2, \frac{1}{2} (L_1 + L_2))$, where $L_1$ and $L_2$ are two different lines on $\mathbb{P}^2$, is of Maeda type, while the log pair $(\mathbb{P}^2, \frac{1}{2} (L + Q))$, where $L$ is a general line and $Q$ is a general conic on $\mathbb{P}^2$, is not of Maeda type.

\begin{definition}
Let $X$ be a projective normal variety and $L$ a pseudo-effective $\bQ$-line bundle on $X$. Let $D$ be an effective $\bQ$-divisor on $X$, then the \emph{pseudo-effective threshold} of $D$ with respect to $L$ is defined as 
$$\tau(D; L):=\sup\{t\in \bR\ |\ \textit{$L-tD$ is pseudo-effective}\}. $$

\end{definition}

\begin{lemma}\label{lem: pe threshold}
Let $(X, D:=\sum_{i=1}^kD_i)$ be a log smooth pair such that $D_i$ are prime divisors on $X$ and $-K_X-\sum_{i=1}^kD_i$ is big and nef, then there exists a positive number $a_d$ depending only on the dimension $d$ such that
$$\tau(D; -K_X-\sum_iD_i)\geq a_d. $$
\end{lemma}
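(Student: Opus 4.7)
The plan is to combine Birkar's boundedness of complements with the coefficient-$1$ structure of $D$, and then convert the resulting geometric information into a bound on the pseudo-effective threshold, possibly via induction on the dimension.

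\smallskip

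\textbf{First}, I would verify that $X$ is of Fano type. For any $\epsilon\in(0,1)$ the pair $(X,(1-\epsilon)D)$ is klt by log smoothness, and $-K_X-(1-\epsilon)D = (-K_X-D)+\epsilon D$ is big (the sum of a big-and-nef class and an effective divisor); a standard MMP-based argument then shows $X$ is of Fano type.

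\smallskip

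\textbf{Second}, I would apply Birkar's boundedness of complements to the lc pair $(X,D)$. Because the coefficients of $D$ lie in the fixed finite set $\{1\}$ and $-K_X-D$ is nef, there exist a positive integer $N=N(d)$ depending only on $d$ and an $N$-complement $D'\ge 0$: $N(K_X+D+D')\sim 0$ with $(X,D+D')$ log canonical. The key structural observation is that since each $D_i$ enters $D$ with coefficient exactly $1$, the lc condition on $(X,D+D')$ forces $\Supp(D')\cap\Supp(D)=\emptyset$. In particular, $|N(-K_X-D)|$ contains the effective member $ND'$ whose support avoids $D$.

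\smallskip

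\textbf{Third}, to convert this into a uniform lower bound for $\tau(D;L)$ with $L:=-K_X-D$, I would argue by induction on $d$, using adjunction. By the adjunction formula,
\[
L|_{D_i} \;=\; -K_{D_i} - (D-D_i)|_{D_i},
\]
which is nef on $D_i$, and the divisor $ND'|_{D_i}$ is nonzero and effective (since $D'\cap D_i$ has codimension at least $2$ in $X$, so defines a proper $\mathbb{Q}$-effective divisor on $D_i$). After passing, if necessary, to a log smooth birational model of $(D_i,(D-D_i)|_{D_i})$ on which $L|_{D_i}$ is big and nef, one applies the inductive hypothesis. The base case $d=1$ is a direct computation on $\mathbb{P}^1$. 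The inductive step must then be converted into a lower bound on $\tau(D;L)$; the natural route is to bound $L^{d-1}\cdot D/L^{d}$ from above uniformly, exploiting the effective member of $|NL|$ disjoint from $D$ together with the restriction estimates from induction.

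\smallskip

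\textbf{The main obstacle} is that the underlying $X$ is \emph{not} bounded (for instance, $(\mathbb{F}_n, s_-)$ satisfies the hypotheses for all $n\ge 0$), so BAB cannot be applied to $X$ directly. The whole force of the argument must come from the log-smooth structure, the existence of a bounded complement $D'$ disjoint from $D$, and the inductive control coming from adjunction. The delicate point in the induction is ensuring that the restricted log anticanonical $L|_{D_i}$ (or its image under a suitable MMP on $D_i$) satisfies the big-and-nef hypothesis so that the lemma applies in dimension $d-1$; this requires that $|NL|$ has a member not containing $D_i$, which is precisely the complement-avoidance fact established in the second step.
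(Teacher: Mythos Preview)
Your plan does not reach the conclusion, and the specific mechanism you propose for the final step points in the wrong direction. First, the claim that $(X,D+D')$ lc forces $\Supp(D')\cap\Supp(D)=\emptyset$ is false: lc only prevents $D'$ from \emph{containing} any $D_i$, not from meeting it (and indeed you contradict yourself two lines later when you say $D'\cap D_i$ has codimension $\ge 2$). Second, and more seriously, neither the existence of an $N(d)$-complement nor the inductive information on $L|_{D_i}$ produces a \emph{lower} bound for $\tau(D;L)$. Your suggested ``natural route''---bounding $L^{d-1}\cdot D/L^d$ from above---yields only the trivial inequality $\tau(D;L)\le L^d/(L^{d-1}\cdot D)$ (pair a nef class against a pseudo-effective one), which is an \emph{upper} bound. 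Nothing in your outline explains how adjunction or a complement disjoint from $D$ would force $L-tD$ to remain pseudo-effective for $t$ up to a fixed $a_d$, and I do not see a way to close this gap along these lines. You also never verify that $L|_{D_i}$ is big, which is needed for the induction hypothesis to apply; you flag this yourself but do not resolve it.

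The paper's argument is entirely different and much lighter: set $M=-K_X-D$ and suppose $\tau(D;M)<\tfrac{1}{d+1}$. Then $K_X+lM\equiv (l-1)M-D$ is not pseudo-effective for $1\le l\le d+1$, so $H^0(X,K_X+lM)=0$; Kawamata--Viehweg vanishing kills the higher cohomology, hence $\chi(X,K_X+lM)=0$ for $d+1$ distinct integer values of $l$. But by Hirzebruch--Riemann--Roch this is a degree-$d$ polynomial in $l$ with leading coefficient $M^d/d!>0$, a contradiction. Thus $a_d=\tfrac{1}{d+1}$ works. No boundedness of complements, no induction, no MMP---just vanishing and Riemann--Roch on the smooth variety $X$.
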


\begin{proof}
Suppose there exists a log pair of Maeda type such that
$$\tau(D; -K_X-\sum_iD_i)< \frac{1}{d+1}. $$
Denote by $M:=-K_X-D$, then we see that $K_X+lM$ is not pseudo-effective for $1\leq l\leq d+1$. This implies the following vanishing for $1\leq l\leq d+1$
$$H^0(X, K_X+lM)=0. $$
On the other hand, by Kawamata-Viehweg vanishing, we have
$$H^i(X, K_X+lM)=0 $$
for any $i\geq 1$ and $l\geq 1$.
Hence we see
$$\chi(X, K_X+lM)=0 $$
for $1\leq l\leq d+1$.
By Hirzebruch-Riemann-Roch formula, the Euler characteristic $\chi(X, K_X+lM)$ is a polynomial of degree $d$ with leading term $\frac{M^d}{d!}l^d$. This leads to a contradiction as the equation $\chi(X, K_X+lM)=0$ cannot have $d+1$ distinct roots.

We are done by taking $a_d=\frac{1}{d+1}.$
\end{proof}

\begin{theorem}\label{thm: bdd maeda}
K-semistable log Fano pairs of Maeda type in dimension $d$ form a log bounded family.
\end{theorem}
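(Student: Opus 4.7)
The strategy is to use K-semistability to bound each coefficient $c_i$ uniformly away from $1$ by a constant depending only on $d$, after which the pair becomes $\epsilon$-lc log Fano and Birkar's BAB together with a short intersection-theoretic argument provide log boundedness.

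For the key step, set $L := -K_X - \sum_j c_j D_j$ and $\tau_i := \tau(D_i; L)$, and apply the beta criterion to the prime divisor $D_i$: K-semistability yields $S_{(X, \sum_j c_j D_j)}(D_i) \leq A_{(X, \sum_j c_j D_j)}(D_i) = 1 - c_i$. For the required matching lower bound on $S$, I use that $-K_X - (1 + a_d) \sum_j D_j$ is pseudo-effective by Lemma \ref{lem: pe threshold}; adding the effective divisor $\sum_{j \neq i}(1 + a_d - c_j) D_j$, whose coefficients are positive since $c_j < 1 < 1 + a_d$, shows that $L - t D_i$ is pseudo-effective for all $t \leq 1 + a_d - c_i$, so $\tau_i \geq 1 + a_d - c_i$. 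The standard concavity inequality $S_{(X, \sum_j c_j D_j)}(D_i) \geq \tau_i/(d+1)$ then gives
\[ \frac{1 + a_d - c_i}{d + 1} \leq 1 - c_i, \]
from which $c_i \leq 1 - a_d/d$ for every $i$. Since $(X, \sum_j D_j)$ is SNC, the log discrepancy formula for SNC pairs implies that $(X, \sum_j c_j D_j)$ is $\epsilon$-lc with $\epsilon = a_d/d$ (indeed, at a codimension-$r$ stratum the discrepancy is at least $r - r(1-\epsilon) \geq \epsilon$), and Birkar's BAB ensures that the underlying varieties $X$ form a bounded family.

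To upgrade this to log boundedness of the pair, I pick a very ample divisor $A$ of bounded degree on $X$, available from the bounded family of underlying varieties. Ampleness of $-K_X - \sum_j D_j$ gives $A^{d-1} \cdot \sum_j D_j < A^{d-1} \cdot (-K_X)$, and the right hand side is uniformly bounded once $X$ is bounded. Since each irreducible component satisfies $A^{d-1} \cdot D_i \geq 1$, both the number of components $k$ and the individual degrees $A^{d-1} \cdot D_i$ are bounded; standard Hilbert scheme arguments then place each $D_i$ in a bounded family, giving the log boundedness of $(X, \sum_j D_j)$ and hence of $(X, \sum_j c_j D_j)$.

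The main obstacle is the first step. Since the $c_i \in [0, 1) \cap \mathbb{Q}$ satisfy no a priori DCC condition or uniform upper bound, neither BAB nor Birkar's complements theorem is directly applicable, and one must extract a uniform bound from K-semistability alone. Lemma \ref{lem: pe threshold} is indispensable here — the naive bound $\tau_i \geq 1 - c_i$ coming directly from the ampleness of $-K_X - \sum_j D_j$ would yield only the tautological inequality $(1 - c_i)/(d+1) \leq 1 - c_i$, whereas the strictly better bound $\tau_i \geq 1 + a_d - c_i$ is what produces the nontrivial constraint $c_i \leq 1 - a_d/d$.
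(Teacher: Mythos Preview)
Your proof is correct and follows the same overall architecture as the paper: use Lemma \ref{lem: pe threshold} together with K-semistability to bound every $c_i$ away from $1$ by a constant depending only on $d$, conclude $\epsilon_d$-lc, apply BAB, and finish with the degree bound $A_X^{d-1}\cdot\sum_i D_i < A_X^{d-1}\cdot(-K_X)$ plus a Chow/Hilbert argument.

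The difference lies in how K-semistability is converted into the bound on $c_i$. The paper argues via the alpha invariant: from $\tau(D;-K_X-\sum_jD_j)\geq a_d$ it extracts an effective divisor $\Delta\sim_\bQ -K_X-\sum_jc_jD_j$ with $\ord_{D_i}(\Delta)\geq a_d$, then uses $\alpha(X,\sum_jc_jD_j)\geq \tfrac{1}{d+1}$ to conclude that $(X,\sum_jc_jD_j+\tfrac{\alpha}{2}\Delta)$ is lc, forcing $c_i+\tfrac{a_d}{2(d+1)}\leq 1$. You instead apply the beta criterion directly to the prime divisor $D_i$ and pair it with the standard concavity bound $S\geq \tau/(d+1)$; the inequality $\tau_i\geq 1+a_d-c_i$ then yields $1-c_i\geq a_d/d$. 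Your route is slightly more direct, avoids the alpha invariant, and actually gives a sharper constant ($\epsilon_d=a_d/d$ versus the paper's $a_d/(2(d+1))$). The paper's route, on the other hand, packages the argument in a way that would generalize immediately if one replaced the divisor $D_i$ by an arbitrary lc center, since it only needs an effective $\Delta$ with large multiplicity there rather than a volume computation.
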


\begin{proof}

Let $(X, \sum_{i=1}^kc_iD_i)$ be a K-semistable log pair of Maeda type, we first show that $X$ belongs to a bounded family.  We claim that there exists a positive number $\epsilon_d$ depending only on the dimension $d$ such that $1-c_i\geq \epsilon_d$ for each $i$. We focus on $c_1$.

By Lemma \ref{lem: pe threshold}, there exists a positive number $a_d$ depending only on the dimension $d$ such that
$$\tau(D_1; -K_X-\sum_i c_iD_i) > \tau(D; -K_X-\sum_i D_i)\geq a_d.$$
Thus there exists $0\leq \Delta\sim_\bQ -K_X-\sum_ic_iD_i$ such that $\ord_{D_1}(\Delta)\geq a_d$. Let $\alpha:=\alpha(X, \sum_i c_iD_i)$ be the alpha invariant of the log Fano pair $(X, \sum_ic_iD_i)$, then the log pair $(X, \sum_ic_iD_i+\frac{\alpha}{2}\Delta)$ is log canonical. Thus we see
$$c_1+\frac{\alpha}{2}\cdot a_d \leq 1.$$
Since $(X, \sum_ic_iD_i)$ is K-semistable, then $\alpha\geq \frac{1}{d+1}$ by Remark \ref{rmk: remarks} (2). Taking $\epsilon_d:= \frac{a_d}{2(d+1)}$, we see that
$$1-c_1\geq \epsilon_d. $$
Similarly, this inequality also holds for other $c_i$, which implies that $(X, \sum_ic_iD_i)$ is $\epsilon_d$-lc. By Theorem \ref{thm: BAB}, we see that $X$ belongs to a bounded family.

To derive the log boundedness, we take a very ample line bundle $A_X$ on $X$ such that $A_X^d$ is upper bounded by $M(d)$, which only depends on the dimension $d$. Note that $-K_X\cdot A_X^{d-1}$ is also upper bounded by a positive number $N(d)$, which depends only on the dimension $d$, then we see 
$$\sum_iD_i\cdot A_X^{d-1}< -K_X\cdot A_X^{d-1}\leq N(d) $$
since $-K_X-\sum_iD_i$ is ample. This means that the degree of $\sum_i D_i$ is upper bounded. By a standard argument on Chow scheme we see that $(X, \sum_iD_i)$ belongs to a log bounded family.
\end{proof}

\begin{example}
Note that if a log Fano pair $(X, \sum D_i)$ is not log smooth, then even if $(X, \sum a_iD_i)$ is K-semistable for some numbers $a_i$, it is not true that $(X, \sum D_i)$ is log bounded. 

Indeed, for any $n\geq 1$ put $X=\mathbb{P}(1,1,n)$ with coordinates $x,y,z$, and $D=\{z=0\}$. Note that $D\sim nH$ where $H$ is a generator of the class group of $X$. It is easy to see that the pair $(X, D)$ is a dlt log Fano pair. Then $(X, (1-\frac{1}{n})D)$ is a K-semistable log Fano pair (e.g. \cite[Prop 2.11]{LZ22})  while there is no boundedness.
\end{example}

We conclude by formulating the following question:
\begin{question}
Fix a natural number $d$ and a rational number $\epsilon>0$. Suppose that $(X, D=\sum D_i)$ is a dlt log Fano pair of dimension $d$, and $(X, (1-\epsilon) D)$ is $\epsilon$-lc. Also, assume that $(X, \sum c_i D_i)$ is K-semistable for some rational numbers $c_i\in [0, 1)$. Is it true that $(X, D)$ is log bounded?
\end{question}

\section{Boundedness II}

Fix two positive integers $d$ and $k$, a positive number $v$,  and a positive integer $I$. We consider the set $\mE:=\mE(d,k,v, I)$ of log pairs $(X, \sum_{i=1}^k D_i)$ satisfying the following conditions:
\begin{enumerate}
\item $X$ is a Fano variety of dimension $d$ and $(-K_X)^d=v$;
\item $D_i$ is an effective $\bQ$-divisor satisfying $D_i\sim_\bQ -K_X$ for every $1\leq i\leq k$;
\item $I(K_X+D_i)\sim 0$ for every $1\leq i\leq k$;
\item there exists $(c_1,...,c_k)\in \Delta^k$ such that $(X, \sum_i c_iD_i)$ is K-semistable, where $\Delta^k:=\{(c_1,...,c_k)\ |\ \textit{$c_i\in [0,1)\cap \bQ$ and $0\leq \sum_i c_i<1$}\}$.
\end{enumerate}

\begin{theorem}\label{thm: bdd II}
The set $\mE$ is log bounded.
\end{theorem}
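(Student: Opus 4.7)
The plan is to bound $\mE$ in two steps: first, show that the underlying Fano variety $X$ lies in a bounded family; second, bound the degrees of the components $D_i$ via $D_i\sim_\bQ -K_X$, and conclude log boundedness by a Chow scheme argument as in the proof of Theorem~\ref{thm: bdd maeda}.

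For the boundedness of $X$, let $B:=\sum_ic_iD_i$ and $c:=\sum_ic_i<1$ be the data supplied by condition~(4). Since $D_i\sim_\bQ -K_X$ by condition~(2), we have $B\sim_\bQ c(-K_X)$ and $(-K_X-B)^d=(1-c)^dv$. K-semistability of $(X,B)$ gives $\alpha(X,B)\ge 1/(d+1)$, and applying this to the effective divisor $\frac{1-c}{k}\sum_jD_j\sim_\bQ -K_X-B$ shows that $(X,\sum_i(c_i+\frac{1-c}{k(d+1)})D_i)$ is log canonical, with total coefficient $(cd+1)/(d+1)<1$; this provides a useful intermediate log canonical log Fano structure on $X$. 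The plan is then to invoke a K-semistability boundedness result for log Fano pairs with fixed $(-K_X)^d=v$ and coefficients of $B$ bounded above by $\max I$ to conclude that $X$ lies in a bounded family. Alternatively, one can reduce to the $k=1$ setting treated in \cite[Section 5]{Zhou23}: setting $D':=\sum_i(c_i/c)D_i\sim_\bQ -K_X$, the pair $(X,cD')=(X,B)$ is K-semistable with coefficients of $D'$ bounded above by $\max I$, placing it within the scope of the known $k=1$ boundedness (after verifying the $k=1$ argument accommodates coefficients in a bounded rather than finite set).

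Once $X$ is bounded, fix a very ample line bundle $A_X$ on $X$ with $A_X^d$ bounded above by a constant depending only on $(d,k,v,I)$. Then $(-K_X)\cdot A_X^{d-1}$ is uniformly bounded, and since $D_i\sim_\bQ -K_X$, so is $D_i\cdot A_X^{d-1}$; hence $\sum_iD_i\cdot A_X^{d-1}$ is uniformly bounded. A standard Chow scheme argument, analogous to the final paragraph of the proof of Theorem~\ref{thm: bdd maeda}, then yields log boundedness of $(X,\sum_iD_i)$.

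The principal obstacle is the first step: translating K-semistability of $(X,B)$ into boundedness of $X$ alone. The difficulty is that $c$ may a priori approach $1$, so that $(-K_X-B)^d=(1-c)^dv$ becomes small and Jiang's theorem \cite{Jiang20} does not apply directly; and the coefficients $\sum_ic_ia_{iP}$ of $B$ are arbitrary rationals in $[0,1)$, lacking a DCC structure. Overcoming this requires combining the alpha invariant estimate above with Birkar's bounded $N$-complements for klt Fano varieties (with $N=N(d)$, from \cite{Birkar19}) and the rigidity provided by $D_i\sim_\bQ -K_X$ together with the fixed finite coefficient set $I$, to either produce a uniform $\epsilon$-lc log Fano structure on $X$ (so that \cite{Birkar19, Birkar21} applies) or to reduce to a setting where a log pair analogue of \cite{Jiang20} is available.
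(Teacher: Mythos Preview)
Your overall architecture is correct and matches the paper: reduce to bounding the set $\mG=\{X\}$, then use a Chow scheme argument via $D_i\sim_\bQ -K_X$ and the finite set $I$ to get log boundedness. The second step is fine.

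However, the first step is not actually carried out. You correctly diagnose the obstruction---$c=\sum_ic_i$ may approach $1$, so neither \cite{Jiang20} nor \cite{Birkar19,Birkar21} applies directly---but the final paragraph only lists ingredients without assembling them into an argument. Your proposed reduction to the $k=1$ case of \cite{Zhou23} via $D'=\sum_i(c_i/c)D_i$ does not work as stated: the coefficients of $D'$ are of the form $\sum_i(c_i/c)a_{iP}$ with $a_{iP}\in I$, and since the ratios $c_i/c$ are arbitrary rationals in $[0,1]$, these coefficients form neither a finite nor a DCC set. The $k=1$ argument in \cite{Zhou23} (and the complement theory it relies on) needs finiteness, so this is a genuine gap, not a routine verification. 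Likewise, the alpha-invariant step you record produces an lc pair $(X,\Gamma)$ with $\Gamma\sim_\bQ\frac{cd+1}{d+1}(-K_X)$, but this is only lc, not $\epsilon$-lc, and its coefficients are again uncontrolled; it does not by itself feed into any known boundedness theorem.

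The paper closes the gap by a rather different mechanism. First (Step 2), using openness of K-semistability in the universal family over $\frac{1}{m}|{-mK_X}|$ together with the existence of $m(d)$-complements, it replaces each $D_i$ by a general $D_i'\sim_\bQ -K_X$ so that every $(X,D_i')$ is log canonical, while preserving K-semistability of $(X,\sum c_iD_i')$. Second (Step 3), it proves a uniform gap $1-\mu(X,\sum D_i')\ge\epsilon_0(d,I)$ by contradiction: along a sequence with $\mu_j\to 1$, the optimal destabilizing divisor $E_j$ for $(X_j,\sum\frac{a_j}{k}D_{ji}')$ induces a special test configuration whose central fiber, for $a_j$ close to $1$, is an lc Calabi-Yau pair; by \cite[Lemma 2.8]{Zhou23} this forces $E_j$ to be an lc place of \emph{every} $(X_j,D_{ji}')$. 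Then both $A$ and $S$ scale linearly, giving $\beta_{(X_j,\sum\frac{a_j}{k}D_{ji}')}(E_j)=(1-a_j)\beta_{X_j}(E_j)<0$ while $\beta_{(X_j,\sum c_{ji}D_{ji}')}(E_j)=(1-\sum c_{ji})\beta_{X_j}(E_j)\ge 0$, a contradiction. The gap then yields $\delta(X)\ge\epsilon_0$, and \cite{Jiang20} finishes. The key ideas you are missing are the replacement of the $D_i$ by lc complements and the lc-place argument that makes $\beta$ factor through $\beta_X$.
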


\begin{proof}
We divide the proof into several steps.

\

\textit{Step 1}. In this step, we explain that it is enough to show that the set 
$$\mG:=\{X\ |\ \textit{$(X, \sum_{i=1}^kD_i)\in \mE$}\}$$
is bounded. Suppose $\mG$ is bounded, then for each $(X, \sum_iD_i)\in \mE$, there exists a very ample line bundle $A_X$ on $X$ such that $A_X^d\leq M(d)$ and $-K_X\cdot A_X^{d-1}\leq N(d)$, where $M(d)$ and $N(d)$ are positive numbers depending only on the dimension $d$. For each $i$, it is clear that $D_i\cdot A_X^{d-1}\leq N(d)$. As the coefficients of $D_i$ are contained in $\frac{1}{I}\bN$, we see that the degree of each component of $D_i$ is upper bounded. Applying a standard argument on Chow scheme we see that $D_i$ lies in a bounded family, hence $\mE$ is log bounded. The rest of the proof is devoted to the boundedness of $\mG$.

\

\textit{Step 2}. In this step, we want to replace $(X, \sum_{i=1}^kD_i)\in \mE$ with $(X, \sum_{i=1}^kD_i')$ such that  $(X, \sum_{i=1}^kD_i')\in \mE$ and $(X, D_i')$ is a  log canonical Calabi-Yau pair for every $1\leq i\leq k$. By Theorem \ref{thm: complement}, there exists a positive integer $m(d)$ depending only on the dimension $d$ such that $X$ admits $m(d)$-complements. We may choose $m(d)$ sufficiently divisible such that $I| m(d)$. Put $\bP:=\frac{1}{m(d)}|-m(d)K_X|$. Then we see that $D_i\in \bP$ for every $1\leq i\leq k$. Since $(X, \sum_{i=1}^kD_i)\in \mE$, there exist some rational numbers $0\leq c_i<1$ such that $(X, \sum_i c_iD_i)$ is K-semistable. We are ready to replace $D_1$ with some $D_1'$.
Let $\mD\subset X\times \bP$ be the universal divisor associated to the linear system $\frac{1}{m(d)}|-m(d)K_X|$ and consider the universal family
$$ (X\times \bP, c_1\mD+\sum_{j=2}^kc_jD_j\times \bP)\to \bP .$$
Since there exists a fiber which is K-semistable, i.e. $(X, \sum_{i=1}^kc_iD_i$), one could find an open subset $U\subset \bP$ such that $(X, c_1\mD_t+\sum_{j=2}^kc_jD_j)$ is K-semistable for any $t\in U$ (see \cite{Xu20, BLX22}). Recall that $X$ admits $m(d)$-complements. Thus we conclude that $(X, \mD_t)$ is log canonical for general $t\in U$. Replacing $D_1$ with such a $\mD_t$, we obtain $D_1'$ as required. By the same way, we could replace other $D_j, 2\leq j\leq k$, step by step.

\

\textit{Step 3}. By step 2, we assume $(X, \sum_{i=1}^kD_i)\in \mE$ satisfies that $(X, D_i)$ is log canonical for every $1\leq i\leq k$. For such log pair $(X, \sum_iD_i)$, we define the following invariant:
$$\mu(X, \sum_{i=1}^kD_i):=\inf\Bigg\{\sum_{i=1}^kc_i\ |\ \textit{$(X, \sum_ic_iD_i)$ is K-semistable}\Bigg\}. $$
It is clear that $0\leq \mu(X, \sum_iD_i)<1$. We aim to show that there is a gap between $\mu(X, \sum_iD_i)$ and $1$. More precisely, there exists a positive real number $0<\epsilon_0(d,k,v, I)<1$ depending only on $d,k,v,I$ such that
$$1-\mu(X, \sum_{i=1}^kD_i)\geq \epsilon_0. $$
Suppose not, we could find a sequence of pairs $(X_j, \sum_{i=1}^kD_{ji})$ satisfying the following conditions:
\begin{enumerate}
\item $(X_j, \sum_{i=1}^kD_{ji})\in \mE$ for every $j$,
\item $(X_j, D_{ji})$ is log canonical for every $1\leq i\leq k$ and $j$,
\item $\mu_j:=\mu(X_j, \sum_{i=1}^kD_{ji})$ is an increasing sequence tending to $1$.
\end{enumerate}
By the definition of $\mu_j$, one could find an increasing sequence of rational numbers $a_j<\mu_j$ tending to $1$ such that
$$(X_j, \sum_{i=1}^k\frac{a_j}{k}D_{ji}) $$
is K-unstable for every $j$. By \cite{BLZ22, LXZ22}, for each $(X_j, \sum_{i=1}^k\frac{a_j}{k}D_{ji}) $, there exists a prime divisor $E_j$ over $X_j$ such that $E_j$ computes the delta invariant of $(X_j, \sum_{i=1}^k\frac{a_j}{k}D_{ji})$ and $E_j$ induces a special test configuration 
$$(\mX_j, \sum_{i=1}^k\frac{a_j}{k}\mD_{ji})\to \bA^1.$$
Subtracting those $a_j$ which are not close enough to $1$ and applying ACC of log canonical thresholds (e.g. \cite{HMX14}), we may assume that   $(\mX_{j, 0}, \sum_{i=1}^k\frac{1}{k}\mD_{ji, 0})$ is log canonical. This means that the test configuration degenerates the log canonical Calabi-Yau pair $(X_j, \sum_{i=1}^k\frac{1}{k}D_{ji})$ to another log canonical Calabi-Yau pair $(\mX_{j, 0}, \sum_{i=1}^k\frac{1}{k}\mD_{ji, 0})$.
By \cite[Lemma 2.8]{Zhou23}, $E_j$ is an lc place of $(X_j, \sum_{i=1}^k\frac{1}{k}D_{ji})$, which forces that $E_j$ is an lc place of $(X_j, D_{ji})$ for every $1\leq i\leq k$. Recalling that $E_j$ computes the delta invariant of the K-unstable log Fano pair $(X_j, \sum_{i=1}^k\frac{a_j}{k}D_{ji})$, we have
\begin{align*}
\beta_{(X_j, \sum_{i=1}^k\frac{a_j}{k}D_{ji})}(E_j)\ &=\ A_{(X_j, \sum_{i=1}^k\frac{a_j}{k}D_{ji})}(E_j) -S_{(X_j, \sum_{i=1}^k\frac{a_j}{k}D_{ji})}(E_j)\\
&=\ (1-a_j)\{A_{X_j}(E_j)-S_{X_j}(E_j)\}\\
&<\ 0.
\end{align*}
On the other hand, $(X_j, \sum_{i=1}^kc_{ji}D_{ji})$ is K-semistable log Fano for some rational $0\leq c_{ji}<1$, thus $0\leq \sum_{i=1}^kc_{ji}<1$ and
\begin{align*}
\beta_{(X_j, \sum_{i=1}^kc_{ji}D_{ji})}(E_j)\ &=\ A_{(X_j, \sum_{i=1}^kc_{ji}D_{ji})}(E_j) -S_{(X_j, \sum_{i=1}^kc_{ji}D_{ji})}(E_j)\\
&=\ (1-\sum_{i=1}^kc_{ji})\{A_{X_j}(E_j)-S_{X_j}(E_j)\}\\
&\geq\ 0,
\end{align*}
which is a contradiction. This contradiction implies the existence of the gap we want.

\

\textit{Step 4}. Combining step 2 and step 3, we see that for each $(X, \sum_{i=1}^kD_i)\in \mE$, one can find another log pair $(X, \sum_{i=1}^kD_i')\in \mE$ such that $(X, \sum_ic_iD_i')$ is K-semistable for some numbers $0\leq c_i<1$ with
$$1-\sum_{i=1}^kc_i\geq \epsilon_0(d, k,v, I). $$
Thus we have
\begin{align*}
\frac{A_{X}(E)}{(1-\sum_ic_i)S_X(E)}\ &\geq  \ \frac{A_{(X, \sum_{i=1}^kc_iD_i)}(E)}{S_{(X, \sum_{i=1}^kc_iD_i)}(E)}\geq 1
\end{align*}
for any prime divisor $E$ over $X$. Hence,
$$\frac{A_X(E)}{S_X(E)}\geq 1-\sum_{i=1}^kc_i\geq \epsilon_0 $$
for any prime divisor $E$ over $X$. This says that the delta invariant of $X$ is bounded from below by $\epsilon_0$. By Theorem \ref{thm: jiang} and Remark \ref{rmk: remarks} (2), the set $\mG$ defined in step 1 lies in a bounded family. The proof is finished.
\end{proof}

\section{On K-semistable domains}

We have studied the boundedness of two kinds of log Fano pairs with certain K-stability. It is then natural to ask what are the K-semistable domains for these log Fano pairs. We present some examples to predict that the domains should be polytopes for log pairs in $\mE$, and we will explore this problem from a theoretical viewpoint in the future work.

\begin{definition}\label{def: kss domain}
Let $(X, \sum_{i=1}^kD_i)$ be a log Fano manifold or a log pair in the set $\mE$. We define the \emph{K-semistable domain} of $(X, \sum_{i=1}^kD_i)$ as follows:
$$\mathrm{Kss}(X, \sum_{i=1}^kD_i):=\overline{\{(c_1,...,c_k)\ |\ \textit{$c_i\in [0,1)\cap \bQ$ and $(X, \sum_{i=1}^kc_iD_i)$ is K-semistable}\}}. $$
The overline in the definition means taking the closure.
\end{definition}

Before presenting the examples, let us first note the following interpolation property for K-stability (e.g. \cite[Proposition 2.13]{ADL24}), which we will use frequently: if $(X, \Delta_1)$ and $(X, \Delta_2)$ are both K-semistable log pairs (log Fano or log Calabi-Yau), where $\Delta_i$ are propotional to $-K_X$, then $(X, t\Delta_1+(1-t)\Delta_2)$ is also K-semistable for any $t\in [0,1]\cap \bQ$.

\begin{example}
Consider a log pair $(X, D_1+D_2):=(\bP^2, 3L_1+3L_2)$, where $L_1, L_2$ are two distinct lines in $\bP^2$. Then 
$$\mathrm{Kss}(X, D_1+D_2)=\{(0,0)\}.$$
To see this, we suppose $(x, y)\in \mathrm{Kss}(X, D_1+D_2)$, then $(\bP^2, 3xL_1+3yL_2)$ is K-semistable. Applying the beta-invariant criterion for K-semistability, we have
\begin{align*}
\beta_{(\bP^2, 3xL_1+3yL_2)}(L_1)\ &=\ 1-3x-\frac{1}{(3-3x-3y)^2}\int_0^{3-3x-3y}(3-3x-3y-t)^2\dt \\
&=\ 1-3x-(1-x-y)\\
&=\ (x+y)-3x\geq 0.
\end{align*}
Similarly, 
\begin{align*}
\beta_{(\bP^2, 3xL_1+3yL_2)}(L_2)\ 
&=\ 1-3y-\frac{1}{(3-3x-3y)^2}\int_0^{3-3x-3y}(3-3x-3y-t)^2\dt \\
&=\ 1-3y-(1-x-y)\\
&=\ (x+y)-3y\geq 0.
\end{align*}
The above two equations have the unique solution $(0,0)$, which agrees with Theorem \ref{thm-2-dim}. 
\end{example}

\begin{example}
Consider a log smooth pair $(X, D_1+D_2):=(\bP^2, \frac{3}{2}Q_1+\frac{3}{2}Q_2)$, where $Q_1, Q_2$ are two distinct smooth conics in $\bP^2$. We show that  $\mathrm{Kss}(X, D_1+D_2)$
is given by the following polytope:
\begin{equation*}
\begin{cases}
x\geq 0\\
y\geq 0\\
y\leq \frac{1}{2}x+\frac{1}{2}\\
y\geq 2x-1\\
x+y\leq 1\\
\end{cases}
\end{equation*}
It is presented by the following picture:

\begin{center}
\begin{tikzpicture}[scale=4]
    \coordinate (a) at (0,0);
    \coordinate (x1) at (0.33,0); \node[below] () at (x1) {$\frac{1}{3}$};
    \coordinate (x2)at (0.5,0);\node[below] () at (x2) {$\frac{1}{2}$};
    \coordinate (x3) at (0.66,0);\node[below] () at (x3) {$\frac{2}{3}$};
    \coordinate (x4) at (1,0);
    \coordinate (y1) at (0,0.33);\node[left] () at (y1) {$\frac{1}{3}$};
    \coordinate (y2)at (0,0.5);\node[left] () at (y2) {$\frac{1}{2}$};
    \coordinate (y3) at (0,0.66);\node[left] () at (y3) {$\frac{2}{3}$};
    \coordinate (y4) at (0,1);
    \coordinate (z13) at (0.33,0.66);
    \coordinate (z31) at (0.66,0.33);

    \draw [<->,thick] (0,1.2) node (yaxis) [above] {$y$}
        |- (1.2,0) node (xaxis) [right] {$x$};
    \draw[dashed] (x4) -- (y4);
    \draw[dashed] (x1) --(z13);
    \draw[dashed] (x3) -- (z31);
    \draw[dashed] (y3) --(z13);
    \draw[dashed] (y1) -- (z31);
    \draw[very thick] (a)--(y2); \draw[very thick] (y2) -- (z13); \draw[very thick] (z13)-- (z31); \draw[very thick] (z31)-- (x2); \draw[very thick] (x2) -- (a);
  \end{tikzpicture}
  \end{center}
  The extremal points of this polytope correspond to the pairs 
$$\bP^2,\ \ \ (\bP^2, \frac{3}{4}Q_2),\ \ \ (\bP^2, \frac{1}{2}Q_1+Q_2),\ \ \ (\bP^2, Q_1+\frac{1}{2}Q_2),\ \ \ (\bP^2, \frac{3}{4}Q_1),$$ 
all of which are K-semistable.   Note that $(\bP^2, \frac{1}{2}Q_1+Q_2)$ and $(\bP^2, Q_1+\frac{1}{2}Q_2)$ are K-semistable since they are log canonical Calabi-Yau pairs, and $(\bP^2, \frac{3}{4}Q_1)$ and $(\bP^2, \frac{3}{4}Q_2)$ are K-semistable by \cite[Theorem 1.5]{LS14} (see also Lemma \ref{lem: zz}). Thus the polytope is contained in $\mathrm{Kss}(X, D_1+D_2)$ by the interpolation property of K-stability. To see that it is exactly the K-semistable domain, it is enough to show that the points in 
\begin{equation*}
\begin{cases}
0< x< \frac{1}{3}\\
\frac{1}{2}<y<\frac{2}{3}\\
y>\frac{1}{2}x+\frac{1}{2}\\
\end{cases}
\
\
\text{and}
\
\
\
\
\
\begin{cases}
\frac{1}{2}<x<\frac{2}{3}\\
0<y<\frac{1}{3}\\
y<2x-1\\
\end{cases}
\end{equation*}
are not K-semistable. These two domains are presented below:

\begin{center}
\begin{tikzpicture}[scale=4]

    \coordinate (a) at (0,0);
    \coordinate (x1) at (0.33,0); \node[below] () at (x1) {$\frac{1}{3}$};
    \coordinate (x2)at (0.5,0);\node[below] () at (x2) {$\frac{1}{2}$};
    \coordinate (x3) at (0.66,0);\node[below] () at (x3) {$\frac{2}{3}$};
    \coordinate (x4) at (1,0);
    \coordinate (y1) at (0,0.33);\node[left] () at (y1) {$\frac{1}{3}$};
    \coordinate (y2)at (0,0.5);\node[left] () at (y2) {$\frac{1}{2}$};
    \coordinate (y3) at (0,0.66);\node[left] () at (y3) {$\frac{2}{3}$};
    \coordinate (y4) at (0,1);
    \coordinate (z13) at (0.33,0.66);
    \coordinate (z31) at (0.66,0.33);

    \draw [<->,thick] (0,1.2) node (yaxis) [above] {$y$}
        |- (1.2,0) node (xaxis) [right] {$x$};
    \draw[dashed] (x4) -- (y4);
    \draw[dashed] (x1) --(z13);
    \draw[dashed] (x3) -- (z31);
    \draw[dashed] (y3) --(z13);
    \draw[dashed] (y1) -- (z31);
    \draw[very thick] (a)--(y2); \draw[very thick] (y2) -- (z13); \draw[very thick] (z13)-- (z31); \draw[very thick] (z31)-- (x2); \draw[very thick] (x2) -- (a);

    \path[fill=lightgray] (y2)--(z13)--(y3)--(y2);
    \path[fill=lightgray] (x2)--(z31)--(x3)--(x2);

\end{tikzpicture}
\end{center}

Suppose $(x,y)\in \mathrm{Kss}(X, D_1+D_2)$, then $(\bP^2, \frac{3}{2}xQ_1+\frac{3}{2}yQ_2)$ is K-semistable. Applying the beta-invariant criterion for K-semistability, we have
\begin{align*}
\beta_{(\bP^2, \frac{3}{2}xQ_1+\frac{3}{2}yQ_2)}(Q_1)\ &=\ 1-\frac{3}{2}x-\frac{1}{(3-3x-3y)^2}\int_0^\frac{{3-3x-3y}}{2}(3-3x-3y-2t)^2\dt \\
&=\ 1-\frac{3}{2}x-\frac{1-x-y}{2}\\
&=\ \frac{1}{2}y-x+\frac{1}{2}\geq 0.
\end{align*}
Similarly, 
\begin{align*}
\beta_{(\bP^2, \frac{3}{2}xQ_1+\frac{3}{2}yQ_2)}(Q_2)\ &=\ 1-\frac{3}{2}y-\frac{1}{(3-3x-3y)^2}\int_0^\frac{{3-3x-3y}}{2}(3-3x-3y-2t)^2\dt \\
&=\ 1-\frac{3}{2}y-\frac{1-x-y}{2}\\
&=\ -y+\frac{1}{2}x+\frac{1}{2}\geq 0.
\end{align*}
It is clear that the points of the two shadowed domains cannot satisfy the above two equations, which implies that they are not contained in the K-semistable domain.
\end{example}

\begin{example}
Consider a log pair $(X, D_1+D_2):=(\bP^2, \frac{3}{2}Q+3L)$, where $Q$ is a smooth conic and $L$ is a line such that $(\bP^2, Q+L)$ is log smooth. We show that $\mathrm{Kss}(X, D_1+D_2)$ is the polytope generated by the points
$$(0,0),\ \ \ (\frac{1}{2},0),\ \ \ (\frac{2}{3}, \frac{1}{3}).$$ 
It is clear that the three points correspond to the the three log pairs 
$$\bP^2,\ \ \ (\bP^2, \frac{3}{4}Q),\ \ \ (\bP^2, Q+L).$$ 
We denote this polytope by $P$. To see $\mathrm{Kss}(X, D_1+D_2)=P$,  first note that the three pairs are all K-semistable, thus $P\subset \mathrm{Kss}(X, D_1+D_2)$. To see the converse inclusion, we apply the beta-invariant criterion for K-semistability to $Q$, $L$. Suppose $(x,y)\in \mathrm{Kss}(X, D_1+D_2)$, then $(\bP^2, \frac{3}{2}xQ+3yL)$ is K-semistable. Thus

\begin{align*}
\beta_{(\bP^2, \frac{3}{2}xQ+3yL)}(Q)\ &=\ 1-\frac{3}{2}x-\frac{1}{(3-3x-3y)^2}\int_0^\frac{{3-3x-3y}}{2}(3-3x-3y-2t)^2\dt \\
&=\ 1-\frac{3}{2}x-\frac{1-x-y}{2}\\
&=\ \frac{1}{2}y-x+\frac{1}{2}\geq 0,
\end{align*}

\begin{align*}
\beta_{(\bP^2, \frac{3}{2}xQ+3yL)}(L)\ &=\ 1-3y-\frac{1}{(3-3x-3y)^2}\int_0^{3-3x-3y}(3-3x-3y-t)^2\dt \\
&=\ 1-3y-(1-x-y)\\
&=\ x-2y\geq 0.
\end{align*}

The polytope defined by 
\begin{equation*}
\begin{cases}
0\leq x\leq \frac{2}{3}\\
0\leq y\leq \frac{1}{3}\\
\frac{1}{2}y-x+\frac{1}{2}\geq 0\\
x-2y\geq 0\\
\end{cases}
\end{equation*}
is exactly $P$, thus $P=\mathrm{Kss}(X, D_1+D_2)$.
\end{example}

We now plan to work out a class of examples in higher dimensions. Before that, we first list some results we will use.

Let $V$ be a projective Fano manifold of dimension $n$, and $S$ is a smooth divisor on $V$ such that $S\sim_\bQ -\lambda K_V$ for some positive rational number $\lambda$. Recall that
$$\mathrm{Kss}(V,S)=\overline{\{a\in [0,1)\cap \bQ\ |\ \textit{$(V,aS)$ is K-semistable} \}}. $$

\begin{lemma}{\rm{(\cite[Theorem 5.1]{ZZ22})}}\label{lem: zz}
Notation as above, suppose V and S are both K-semistable and $0<\lambda<1$, then $\mathrm{Kss}(V,S)= [0,1-\frac{r}{n}]$, where $r=\frac{1}{\lambda}-1$.
\end{lemma}

As a special case, we have

\begin{lemma}{\rm{(\cite{ZZ22})}}
Let $(\bP^n, S_d)$ be a log pair where $S_d$ is a hypersurface of degree $1\leq d\leq n$. Suppose $S_d$ is K-semistable (this is known for general smooth hypersurfaces and expected to be true for all smooth hypersurfaces). Then we have $\mathrm{Kss}(\bP^n, S_d)=[0, 1-\frac{r}{n}]$, where $r=\frac{n+1-d}{d}$.
\end{lemma}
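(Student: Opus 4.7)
The plan is to apply the preceding lemma (attributed to \cite{ZZ22}) directly, specialized to $V=\bP^n$ and $S=S_d$. First I would verify the hypotheses of that lemma. The Fano manifold $\bP^n$ is well known to be K-polystable, hence in particular K-semistable; the smooth hypersurface $S_d$ is K-semistable by the standing assumption. So the only remaining check is to exhibit the proportionality constant $\lambda$ and confirm $0<\lambda<1$.

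Next I would compute $\lambda$. Since $-K_{\bP^n}\sim(n+1)H$ for $H$ the hyperplane class and $S_d\sim dH$, we get
$$S_d\sim_\bQ \frac{d}{n+1}(-K_{\bP^n}),$$
so $\lambda=\frac{d}{n+1}$. The inequality $0<\lambda<1$ is then equivalent to $1\leq d\leq n$, which is precisely the hypothesis. Consequently
$$r=\frac{1}{\lambda}-1=\frac{n+1}{d}-1=\frac{n+1-d}{d},$$
and the previous lemma gives $\mathrm{Kss}(\bP^n,S_d)=[0,1-\tfrac{r}{n}]$, as desired.

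There is no real obstacle here: the statement is a direct specialization of the cited result, with all substantive content absorbed into that lemma; only the identification of $\lambda$, and hence of $r$, has to be carried out explicitly.
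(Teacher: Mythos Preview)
Your argument is correct and is exactly what the paper does: the lemma is stated immediately after the \cite{ZZ22} result with the phrase ``As a special case, we have'', and no separate proof is given. Your explicit identification of $\lambda=\tfrac{d}{n+1}$ and the resulting $r=\tfrac{n+1-d}{d}$ is precisely the intended specialization.
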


The following lemma is also well known, see e.g. \cite[Prop 2.11]{LZ22}.
\begin{lemma}\label{lem:cone stability}
Let $(V,\Delta)$ be an n-dimensional log Fano pair, and L an ample line bundle on V such that $L\sim_\bQ -\frac{1}{r}(K_V+\Delta)$ for some $0<r\leq n+1$. Suppose Y is the projective cone over V associated to L with infinite divisor $V_\infty$, then $(V,\Delta)$ is K-semistable  if and only if $(Y,\Delta_Y+(1-\frac{r}{n+1})V_\infty)$ is K-semistable, where $\Delta_Y$ is the divisor on Y naturally extended by $\Delta$ (i.e. $\Delta_Y$ is the cone over $\Delta$).
\end{lemma}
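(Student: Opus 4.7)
The plan is to exploit the $\bG_m$-action on a projective cone and reduce K-stability of $(Y,\Delta_Y+(1-\tfrac{r}{n+1})V_\infty)$ to a beta-invariant computation on $(V,\Delta)$. Let $p\colon \tilde Y := \bP_V(\mathcal{O}_V\oplus L) \to V$ be the natural $\bP^1$-bundle with its two sections $V_0$ and $V_\infty$, and let $\tilde Y \to Y$ contract $V_0$ to the vertex. The coefficient $a:=1-\tfrac{r}{n+1}$ is chosen precisely so that a direct computation via the canonical bundle formula for $\bP^1$-bundles yields
\[-(K_Y+\Delta_Y+aV_\infty)\sim_\bQ \tfrac{n+1}{r}\cdot L_Y,\]
where $L_Y$ is the polarization on $Y$ induced by $L$; in particular the log Fano conditions on the two sides are equivalent.

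First I would build a dictionary between $\bG_m$-invariant divisorial valuations on $Y$. Every prime divisor $E$ over $V$ gives a $\bG_m$-invariant divisorial valuation $C(E)$ over $Y$ (the cone over $E$), while $V_\infty$ is the remaining invariant prime divisor on $Y$ itself (with the vertex contributing quasi-monomial valuations in the cone direction). A ramification computation on $\tilde Y$ together with the polarization identity yields
\[A_{(Y,\Delta_Y+aV_\infty)}(C(E)) = A_{(V,\Delta)}(E).\]
For the $S$-invariant I would use the $\bG_m$-weight decomposition of $H^0(Y,-m(K_Y+\Delta_Y+aV_\infty))$ into pieces isomorphic to $H^0(V,kL)$ (each weight giving one summand), and then integrate volumes fiberwise along the cone axis; the Fubini-type computation gives
\[S_{(Y,\Delta_Y+aV_\infty)}(C(E)) = S_{(V,\Delta)}(E).\]
Thus $\beta_Y(C(E)) = \beta_V(E)$. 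For $V_\infty$ one checks directly that $A_Y(V_\infty) = S_Y(V_\infty) = \tfrac{r}{n+1}$, so $\beta_Y(V_\infty) = 0$, and the analogous weight computation shows $\beta_Y \geq 0$ on any invariant quasi-monomial valuation in the cone direction whenever $\beta_V \geq 0$.

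To pass from invariant valuations to arbitrary ones I would invoke the equivariant K-stability criterion: for a log pair carrying a torus action, K-semistability (resp. K-polystability) is equivalent to the corresponding equivariant notion, and it suffices to test $\beta$ on $\bG_m$-invariant divisorial valuations. Combined with the identifications above, both implications of the K-semistable equivalence follow. For K-polystability one additionally observes that any non-product $\bG_m$-equivariant special test configuration of $(Y,\Delta_Y+aV_\infty)$ must arise as the cone over a non-product special test configuration of $(V,\Delta)$, so product versus non-product configurations match on the two sides.

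The main obstacle I anticipate is the bookkeeping of $A$ and $S$, especially locating the factor $\tfrac{r}{n+1}$ in the right place and isolating the contribution of $V_\infty$ to the volume of $-m(K_Y+\Delta_Y+aV_\infty)-xC(E)$, as well as verifying that no invariant quasi-monomial valuation in the cone direction other than $V_\infty$ gives a new obstruction. Once the dictionary and the $\beta$ identities are in place, the equivariant K-stability criterion is used as a black box and both equivalences follow formally.
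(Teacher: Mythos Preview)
The paper does not prove this lemma; it quotes it as a known result with a pointer to \cite[Prop~2.11]{LZ22}. Your outlined strategy---reduce to $\bG_m$-invariant valuations via the equivariant K-stability criterion, identify $A$ and $S$ on cone-type divisors $C(E)$ with the corresponding invariants on $(V,\Delta)$, and verify $\beta(V_\infty)=0$---is exactly the argument carried out in that reference (and in closely related work of Li--Liu and Li--Xu). The identities $A_{(Y,\Delta_Y+aV_\infty)}(V_\infty)=S_{(Y,\Delta_Y+aV_\infty)}(V_\infty)=\tfrac{r}{n+1}$ and $\beta_Y(C(E))=\beta_V(E)$ are correct as stated, and the ``obstacle'' you flag (the bookkeeping for $S$ and the quasi-monomial valuations mixing a horizontal divisor with the cone direction) is precisely the technical core worked out there. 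So there is nothing in the paper to compare against: your plan \emph{is} the proof being cited.
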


Applying Lemma \ref{lem: zz}, a simple computation tells us the following three facts:

\begin{enumerate}
\item Let $Q\subset \bP^n$ be a smooth quadric, then $(\bP^n, aQ)$ is K-semistable for any $a\in [0, \frac{n+1}{2n}]$. Moreover, $(\bP^n, \frac{n+1}{2n}Q)$ is K-semistable and it admits a K-semistable degeneration $(X, \frac{n+1}{2n}D)$, where $X$ is a hypersurface in $\bP(1^{n+1}, 2)$ (with coordinates $(x_0,...,x_n, z)$) defined by $x_0^2+x_1^2+...+x_n^2=0$, and $D=X\cap \{z=0\}$.
\item Let $Q_l\subset \bP^{l+1}$ be a smooth quadric of dimension $l$, then $(Q_l, aQ_{l-1})$ is K-semistable  for any $a\in [0, \frac{1}{l}]$.
\item Let $Q_l, Q_l'\subset \bP^{l+1}$ ($l\geq 2$) be two distinct smooth quadric hypersurfaces such that $(\bP^{l+1}, Q_l+Q_l')$ is log smooth, then $(Q_l, a{Q'_{l}}|_{Q_l})$ is K-semistable  for any $a\in [0, \frac{l+2}{2l}]$ (note that $Q_l'\cap Q_l$ is K-semistable by \cite{AGP06}).
\end{enumerate}

We are ready to prove the following K-semistability.
\begin{theorem}\label{thm: polytope example}
For $n\geq 2$, the log pair $(\bP^n, \frac{n}{2(n-1)}Q+\frac{1}{n-1}L)$ is K-semistable, where $Q$ is a smooth quadric hypersurface and $L$ is a hyperplane such that $(\bP^n, Q+L)$ is log smooth.
\end{theorem}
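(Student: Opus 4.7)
My plan is to combine the projective cone construction (Lemma \ref{lem:cone stability}) with a degeneration argument and openness of K-semistability in families. The case $n=2$ is immediate: $(\bP^2, Q+L)$ is a log Calabi-Yau pair (since $K_{\bP^2}+Q+L\sim_\bQ 0$) that is log canonical because it is log smooth, hence K-semistable by definition. So I focus on $n\geq 3$, where the pair is genuinely log Fano.

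The starting input is the extremal K-semistable pair $(\bP^{n-1}, \tfrac{n}{2(n-1)} Q_{n-2})$, where $Q_{n-2}\subset\bP^{n-1}$ is a smooth quadric hypersurface; K-semistability follows from fact (1) listed after Lemma \ref{lem: zz}, applied in dimension $n-1$. A direct computation gives $-K_{\bP^{n-1}}-\tfrac{n}{2(n-1)}Q_{n-2}\sim_\bQ \tfrac{n(n-2)}{n-1}H$, with $H$ the hyperplane class. Applying Lemma \ref{lem:cone stability} with the ample line bundle $H$ on $V=\bP^{n-1}$ gives $r=\tfrac{n(n-2)}{n-1}$; the projective cone $Y$ over $V$ is $\bP^n$, the infinity divisor $V_\infty$ is a hyperplane, and the extension of $Q_{n-2}$ is the quadric cone $Q_{\mathrm{cone}}\subset\bP^n$ (singular only at the cone apex). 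Since $1-r/n=\tfrac{1}{n-1}$, the cone lemma yields K-semistability of $(\bP^n,\tfrac{n}{2(n-1)}Q_{\mathrm{cone}}+\tfrac{1}{n-1}V_\infty)$.

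To smooth $Q_{\mathrm{cone}}$ while keeping $V_\infty$ fixed, I would deform in a one-parameter family. Choose coordinates $[x_0:\cdots:x_{n-1}:z]$ on $\bP^n$ with $V_\infty=\{z=0\}$ and $Q_{\mathrm{cone}}=\{\sum_{i=0}^{n-1} x_i^2=0\}$, and set $Q_t=\{\sum_{i=0}^{n-1} x_i^2+tz^2=0\}$ for $t\in\bA^1$. For $t\neq 0$, $Q_t$ is smooth and meets $V_\infty$ transversally (the intersection is a smooth quadric in $V_\infty$), so $(\bP^n, Q_t+V_\infty)$ is log smooth. Every fiber of the family $(\bP^n,\tfrac{n}{2(n-1)}Q_t+\tfrac{1}{n-1}V_\infty)_{t\in\bA^1}$ is a klt log Fano pair: the nodal singularity of $Q_{\mathrm{cone}}$ lies away from $V_\infty$ and both boundary coefficients are strictly less than $1$. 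By openness of K-semistability in families of klt log Fano pairs (\cite{Xu20, BLX22}), K-semistability of the central fiber propagates to a Zariski open neighborhood of $0\in\bA^1$; picking any such $t\neq 0$ yields a log smooth K-semistable pair. Since all log smooth configurations $(\bP^n, Q+L)$ with $Q$ a smooth quadric and $L$ a transversal hyperplane form a single $\PGL(n+1)$-orbit and K-semistability is $\PGL(n+1)$-invariant, the conclusion follows for every such configuration. The main subtle point is the klt verification on the central fiber used to legitimize openness; this reduces to observing that the apex of $Q_{\mathrm{cone}}$ is a Du Val $A_1$-singularity disjoint from $V_\infty$, so the full pair is klt with both coefficients strictly less than $1$.
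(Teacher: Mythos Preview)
Your proof is correct and uses the same high-level scheme as the paper (cone lemma plus openness of K-semistability), but the degeneration you choose is genuinely different. The paper degenerates the ambient variety: it uses the K-polystable degeneration of $(\bP^n,\tfrac{n+1}{2n}Q)$ to the weighted hypersurface $X=\{x_0^2+\cdots+x_n^2=0\}\subset\bP(1^{n+1},2)$, recognizes the central fiber $(X,\tfrac{n}{2(n-1)}D+\tfrac{1}{n-1}D')$ as the projective cone over $(Q_{n-1},\tfrac{1}{n-1}Q_{n-2})$ with polarization $\mO_Q(2)$, and then feeds in fact (2), namely K-semistability of $(Q_{n-1},\tfrac{1}{n-1}Q_{n-2})$. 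You instead keep the ambient $\bP^n$ fixed and degenerate the quadric to a nodal cone, realizing the central fiber as the projective cone over $(\bP^{n-1},\tfrac{n}{2(n-1)}Q_{n-2})$ with polarization $\mO(1)$, and feed in fact (1) at the endpoint value $\tfrac{n}{2(n-1)}$.

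Your route is arguably more elementary: the cone is just $\bP^n$ rather than a singular weighted hypersurface, the family is an explicit pencil of quadrics, and the klt check on the central fiber is transparent (an ordinary double point on $Q_{\mathrm{cone}}$ with coefficient $<1$, disjoint from $V_\infty$). The paper's route, on the other hand, fits naturally with the moduli picture, since the singular variety $X$ is precisely the K-polystable replacement of $(\bP^n,\tfrac{n+1}{2n}Q)$, and the same degeneration is reused verbatim in the companion Theorem \ref{thm: polytope example II}. One small terminological point: the apex of $Q_{\mathrm{cone}}$ is an ordinary double point; calling it ``Du Val $A_1$'' is standard only when $n=3$.
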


\begin{proof}
The case $n=2$ is clear. We assume $n\geq 3$. As we have observed,  $(\bP^n, \frac{n+1}{2n}Q)$ is K-semistable and admits a K-semistable degeneration $(X, \frac{n+1}{2n}D)$,  where $X$ is a hypersurface in $\bP(1^{n+1}, 2)$ (with coordinates $(x_0,...,x_n, z)$) defined by $x_0^2+x_1^2+...+x_n^2=0$, and $D=X\cap \{z=0\}$. Denote by $D'$ the corresponding degeneration of $L$ under the test configuration. 

It suffices to show that the log pair $(X, \frac{n}{2(n-1)}D+\frac{1}{n-1}D')$ is a K-semistable log Fano pair.  To see this, first note that 
$(X, \frac{n}{2(n-1)}D+\frac{1}{n-1}D')$ is the projective cone over $(Q, \frac{1}{n-1}L\cap Q)$ with respect to the polarization $\mO_Q(2):=i^*\mO_{\bP^n}(2)$, where $i: Q\to \bP^n$ is the natural embedding.  We have the following computation:
$$\mO_Q(2)\sim_\bQ -\frac{1}{r}(K_{Q}+\frac{1}{n-1}L\cap Q) \quad \text{and} \quad  \frac{n}{2(n-1)}=1-\frac{r}{n}$$
for 
$$r= \frac{n-1-\frac{1}{n-1}}{2}.$$ 
By our notation, we have
$$(Q_{n-1}, \frac{1}{n-1}Q_{n-2})=(Q, \frac{1}{n-1}L\cap Q),$$
which is K-semistable as we have seen before.  By  Lemma \ref{lem:cone stability}, we see that $(X, \frac{n}{2(n-1)}D+\frac{1}{n-1}D')$ is K-semistable.
\end{proof}

\begin{theorem}\label{thm: polytope example II}
For $n\geq 3$, the log pair $(\bP^n, \frac{n+1}{2(n-1)}Q+\frac{n+1}{2(n-1)}Q')$ is K-semistable, where $Q, Q'$ are smooth quadric hypersurfaces  such that $(\bP^n, Q+Q')$ is log smooth.
\end{theorem}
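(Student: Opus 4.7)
The plan is to handle $n=3$ and $n\geq 4$ separately, reducing the latter to the projective-cone construction already used in Theorem \ref{thm: polytope example}.

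For $n=3$ the coefficient $\tfrac{n+1}{2(n-1)}$ equals $1$, so the pair becomes $(\bP^3, Q+Q')$ with $K_{\bP^3}+Q+Q'\sim 0$. Since $(\bP^3,Q+Q')$ is log smooth, hence log canonical, this log Calabi--Yau pair is K-semistable by definition.

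For $n\geq 4$ I would argue as follows. The same 1-PS that realises the K-polystable degeneration $(\bP^n,\tfrac{n+1}{2n}Q)\rightsquigarrow (X,\tfrac{n+1}{2n}D)$ from Theorem \ref{thm: polytope example} (where $X\subset\bP(1^{n+1},2)$ cuts out $x_0^2+\cdots+x_n^2=0$ and $D=X\cap\{z=0\}$ is the infinity divisor of the projective cone over $Q_{n-1}$ polarised by $\mO_{Q_{n-1}}(2)$) simultaneously drags $Q'$ to a limit divisor $D''\subset X$. This yields a test configuration of $(\bP^n,\tfrac{n+1}{2(n-1)}(Q+Q'))$ with central fibre $(X,\tfrac{n+1}{2(n-1)}(D+D''))$, so it suffices to prove the latter is a K-semistable log Fano pair.

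The central fibre is a projective cone. Indeed, $D$ is the infinity divisor while $D''$ is the ruled cone over $Q\cap Q'\subset Q_{n-1}$, so $(X,\tfrac{n+1}{2(n-1)}D+\tfrac{n+1}{2(n-1)}D'')$ is the projective cone over $(Q_{n-1},\tfrac{n+1}{2(n-1)}(Q\cap Q'))$ with polarisation $\mO_{Q_{n-1}}(2)$. Using $K_{Q_{n-1}}\sim -(n-1)H|_{Q_{n-1}}$ and $Q\cap Q'\sim 2H|_{Q_{n-1}}$, one finds
\[
\mO_{Q_{n-1}}(2)\sim_{\bQ}-\tfrac{1}{r}\!\left(K_{Q_{n-1}}+\tfrac{n+1}{2(n-1)}(Q\cap Q')\right)\quad\text{with}\quad r=\tfrac{n(n-3)}{2(n-1)},
\]
and $1-\tfrac{r}{n}=\tfrac{n+1}{2(n-1)}$, matching the infinity-divisor coefficient. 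For $n\geq 4$ one has $0<r\leq n$, as required by Lemma \ref{lem:cone stability}.

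The main obstacle, and the remaining step, is verifying K-semistability of the base $(Q_{n-1},\tfrac{n+1}{2(n-1)}(Q\cap Q'))$. I would invoke the third fact listed after Lemma \ref{lem: zz}: for smooth quadrics in log-smooth position the pair $(Q_l,aQ_l'|_{Q_l})$ is K-semistable throughout the stated interval, and setting $l=n-1$ the coefficient $\tfrac{n+1}{2(n-1)}$ lies in this interval. Applying Lemma \ref{lem:cone stability} then gives K-semistability of the central fibre, completing the proof.
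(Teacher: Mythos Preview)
Your proposal is correct and follows essentially the same route as the paper: handle $n=3$ as the trivial log Calabi--Yau case, and for $n\geq 4$ degenerate along the same one-parameter subgroup to the projective cone over $(Q_{n-1},\tfrac{n+1}{2(n-1)}Q'|_{Q_{n-1}})$, compute $r=\tfrac{n(n-3)}{2(n-1)}$, invoke fact~(3) after Lemma~\ref{lem: zz} (using $\tfrac{n+1}{2(n-1)}\leq \tfrac{n+3}{2n}$ for $n\geq 3$), and conclude via Lemma~\ref{lem:cone stability}. The only cosmetic differences are your simplified expression for $r$ and your explicit check that $0<r\leq n$, which the paper leaves implicit.
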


\begin{proof}
The case $n=3$ is clear. We assume $n\geq 4$. The same as before,  $(\bP^n, \frac{n+1}{2n}Q)$ is K-semistable and admits a K-semistable degeneration $(X, \frac{n+1}{2n}D)$,  where $X$ is a hypersurface in $\bP(1^{n+1}, 2)$ (with coordinates $(x_0,...,x_n, z)$) defined by $x_0^2+x_1^2+...+x_n^2=0$, and $D=X\cap \{z=0\}$. Denote by $D'$ the corresponding degeneration of $Q'$ under the test configuration. 

It suffices to show that the log pair $(X, \frac{n+1}{2(n-1)}D+\frac{n+1}{2(n-1)}D')$ is a K-semistable log Fano pair.  To see this, first note that 
$(X, \frac{n+1}{2(n-1)}D+\frac{n+1}{2(n-1)}D')$ is the projective cone over $(Q, \frac{n+1}{2(n-1)}Q'|_Q)$ with respect to the polarization $\mO_Q(2):=i^*\mO_{\bP^n}(2)$, where $i: Q\to \bP^n$ is the natural embedding.  We have the following computation:
$$\mO_Q(2)\sim_\bQ -\frac{1}{r}(K_{Q}+\frac{n+1}{2(n-1)}Q'|_Q) \quad \text{and} \quad  \frac{n+1}{2(n-1)}=1-\frac{r}{n}$$
for 
$$r= \frac{n-1-\frac{n+1}{n-1}}{2}.$$ 
Note that $(Q, \frac{n+1}{2(n-1)}Q'|_Q)$ is K-semistable.  By  Lemma \ref{lem:cone stability}, we see that $(X, \frac{n+1}{2(n-1)}D+\frac{n+1}{2(n-1)}D')$ is K-semistable.
\end{proof}

Armed by Theorem \ref{thm: polytope example},  \ref{thm: polytope example II} , we consider the following two classes of examples.

\begin{example}\label{exa: polytope example}
Consider a pair $(\bP^n, Q+L)$ for $n\geq 2$, where $Q$ is a smooth quadric hypersurface and $L$ is a hyperplane such that $(\bP^n, Q+L)$ is log smooth. We want to compute $\mathrm{Kss}(\bP^n, Q+L)$.

Suppose $(x,y)\in \mathrm{Kss}(\bP^n, Q+L)$, then $(\bP^n, xQ+yL)$ is K-semistable. Applying the beta-invariant criterion for K-semistability, we have
\begin{align*}
\beta_{(\bP^n, xQ+yL)}(Q)\ &=\ 1-x-\frac{1}{(n+1-2x-y)^n}\int_0^\frac{{n+1-2x-y}}{2}(n+1-2x-y-2t)^n\dt \\
&=\ 1-x-\frac{n+1-2x-y}{2(n+1)}\\
&=\ \frac{1}{2}-\frac{nx}{n+1}+\frac{y}{2(n+1)}\geq 0.
\end{align*}
Similarly, 
\begin{align*}
\beta_{(\bP^n, xQ+yL)}(L)\ &=\ 1-y-\frac{1}{(n+1-2x-y)^n}\int_0^{n+1-2x-y}(n+1-2x-y-t)^n\dt \\
&=\ 1-y-\frac{n+1-2x-y}{(n+1)}\\
&=\ \frac{2x}{n+1}-\frac{ny}{n+1}\geq 0.
\end{align*}
It is clear that the polytope given by
\begin{equation*}
\begin{cases}
0\leq x\leq 1\\
0\leq y\leq 1\\
\frac{1}{2}-\frac{nx}{n+1}+\frac{y}{2(n+1)}\geq 0\\
\frac{2x}{n+1}-\frac{ny}{n+1}\geq 0\\
\end{cases}
\end{equation*}
is generated by the extremal points 
$$(0,0),\ \ \ (\frac{n+1}{2n}, 0),\ \ \ (\frac{n}{2(n-1)},\frac{1}{n-1}).$$
 These three points correspond to log pairs 
$$\bP^n, \ \ \ (\bP^n, \frac{n+1}{2n}Q), \ \ \ (\bP^n, \frac{n}{2(n-1)}Q+\frac{1}{n-1}L),$$ which are all K-semistable by Theorem \ref{thm: polytope example}. Thus the polytope is exactly $\mathrm{Kss}(\bP^n, Q+L)$. We also mention that when $n=3$, the example corresponds to  \cite[Theorem 1 (1)]{Loginov21}.
\end{example}

\begin{example}\label{exa: polytope example II}
Consider a log pair $(\bP^n, Q+Q')$ for $n\geq 3$, where $Q, Q'$ are smooth quadric hypersurfaces  such that $(\bP^n, Q+Q')$ is log smooth. We want to compute $\mathrm{Kss}(\bP^n, Q+Q')$.

Suppose $(x,y)\in \mathrm{Kss}(\bP^n, Q+Q')$, then $(\bP^n, xQ+yQ')$ is K-semistable. Applying the beta-invariant criterion for K-semistability, we have
\begin{align*}
\beta_{(\bP^n, xQ+yQ')}(Q)\ &=\ 1-x-\frac{1}{(n+1-2x-2y)^n}\int_0^\frac{{n+1-2x-2y}}{2}(n+1-2x-2y-2t)^n\dt \\
&=\ 1-x-\frac{n+1-2x-2y}{2(n+1)}\\
&=\ \frac{1}{2}-\frac{nx}{n+1}+\frac{y}{n+1}\geq 0.
\end{align*}
Similarly, 
\begin{align*}
\beta_{(\bP^n, xQ+yQ')}(Q')\ &=\ 1-y-\frac{1}{(n+1-2x-2y)^n}\int_0^\frac{{n+1-2x-2y}}{2}(n+1-2x-2y-2t)^n\dt \\
&=\ 1-y-\frac{n+1-2x-2y}{2(n+1)}\\
&=\ \frac{1}{2}-\frac{ny}{n+1}+\frac{x}{n+1}\geq 0.
\end{align*}
It is clear that the polytope given by
\begin{equation*}
\begin{cases}
0\leq x\leq 1\\
0\leq y\leq 1\\
\frac{1}{2}-\frac{nx}{n+1}+\frac{y}{n+1}\geq 0\\
\frac{1}{2}-\frac{ny}{n+1}+\frac{x}{n+1}\geq 0\\
\end{cases}
\end{equation*}
is generated by the extremal points 
$$(0,0),\ \ \ (\frac{n+1}{2n}, 0),\ \ \ (0, \frac{n+1}{2n}),\ \ \ (\frac{n+1}{2(n-1)},\frac{n+1}{2(n-1)}).$$ 
These four points correspond to log pairs 
$$\bP^n,\ \ \ (\bP^n, \frac{n+1}{2n}Q),\ \ \ (\bP^n, \frac{n+1}{2n}Q'),\ \ \ (\bP^n, \frac{n+1}{2(n-1)}Q+\frac{n+1}{2(n-1)}Q'), $$ which are all K-semistable by Theorem \ref{thm: polytope example II}. Thus the polytope is exactly $\mathrm{Kss}(\bP^n, Q+Q')$.
\end{example}

\begin{remark}
For all examples we treat above, the K-semistable domains are polytopes. In \cite{Loginov21}, the K-semistable domains for some log Fano pairs of Maeda type in dimension three are computed, but it is hard to say whether these sets are convex or polytopes.

\end{remark}

\bibliography{reference.bib}
\end{document}